\title{Ricci-flat deformations and special holonomy}
\author{Johannes Nordström}
\subjclass[2000]{53C25}
\address{Department of Mathematics, Imperial College London,
    London SW7 2AZ, UK}
\email{j.nordstrom@imperial.ac.uk}
\title{Ricci-flat deformations of metrics with exceptional holonomy}
\newcommand{\ignore}[1]{}
\DeclareMathOperator{\tr}{tr}
\newcommand{\ad}{\mathrm{ad}}
\newcommand{\half}{{\textstyle\frac{1}{2}}}
\newcommand{\bbr}{\mathbb{R}}
\newcommand{\bbc}{\mathbb{C}}
\newcommand{\bbrp}{\mathbb{R}^{+}}
\newcommand{\halg}{\mathfrak{h}}
\newcommand{\glalg}{\mathfrak{gl}}
\newcommand{\gstr}{$G$\nobreakdash-\hspace{0pt}structure}
\newcommand{\gtstr}{$G_{2}$\nobreakdash-\hspace{0pt}structure}
\newcommand{\spstr}{$Spin(7)$-structure}
\newcommand{\spnstr}{$Sp(n)$-structure}
\newcommand{\gmfd}{$G$\nobreakdash-\hspace{0pt}manifold}
\newcommand{\gtmfd}{$G_{2}$\nobreakdash-\hspace{0pt}manifold}
\newcommand{\spmfd}{$Spin(7)$\nobreakdash-\hspace{0pt}manifold}
\newcommand{\gmetric}{$G$\nobreakdash-\hspace{0pt}metric}
\newcommand{\gtmetric}{$G_{2}$\nobreakdash-\hspace{0pt}metric}
\newcommand{\spmetric}{$Spin(7)$\nobreakdash-\hspace{0pt}metric}
\newcommand{\ltwoorth}{$L^{2}$\nobreakdash-\hspace{0pt}orthogonal}
\newcommand{\sustr}{$SU(n)$\nobreakdash-\hspace{0pt}structure}
\newcommand{\holda}[1]{C^{#1, \alpha}}
\newcommand{\holdax}[2]{C^{#1, \alpha}_{#2}}
\newcommand{\inc}{e}
\newcommand{\harm}{\mathcal{H}}
\newcommand{\norm}[1]{\Vert #1 \Vert}
\newcommand{\slt}{K}
\newcommand{\inner}[1]{{<}#1{>}}
\newcommand{\linner}[1]{{<}#1{>}_{L^{2}}}
\newcommand{\calm}{\mathcal{M}}
\newcommand{\calw}{\mathcal{W}}
\newcommand{\calq}{\mathcal{Q}}
\newcommand{\cals}{\mathcal{S}}
\newcommand{\cali}{\mathcal{I}}
\newcommand{\cald}{\mathcal{D}}
\newcommand{\qcalz}{\calq}
\newcommand{\qcalg}{\calr}
\newcommand{\pcal}[1]{\mathcal{#1}}
\newcommand{\defstrp}{\defstrg}
\newcommand{\defmetp}{\defmetz}
\newcommand{\kpcal}[1]{\mathcal{#1}_{k+1}}
\newcommand{\defstr}{\calm}
\newcommand{\defmet}{\calw}
\newcommand{\defstrg}{\defstr_{G}}
\newcommand{\defmetg}{\defmet_{G}}
\newcommand{\defmetz}{\defmet_{0}}
\newcommand{\calr}{\mathcal{R}}
\newcommand{\lmgt}{\Lambda_{G_{2}}}
\newcommand{\lmsp}{\Lambda_{Spin(7)}}
\newcommand{\lmsu}{\Lambda_{SU(n)}}
\newcommand{\lmspn}{\Lambda_{Sp(n)}}
\newcommand{\gspace}{\Lambda_{G}T^{*}M}
\newcommand{\gtspace}{\Lambda_{G_{2}}T^{*}M}
\newcommand{\spspace}{\Lambda_{Spin(7)}T^{*}M}
\newcommand{\suspace}{\Lambda_{SU(n)}T^{*}M}
\newcommand{\spnspace}{\Lambda_{Sp(n)}T^{*}M}
\newcommand{\scrmkz}{\mathscr{M}^{k}_{Z}}
\newcommand{\holdad}[1]{C^{#1, \alpha}_{\delta}}
\newcommand{\caldkpz}{\mathcal{D}^{k+1}_{Z}}
\newcommand{\cptopenthm}{I}
\newcommand{\eacopenthm}{I$'$}
\newtheorem{thm}{Theorem}[section]
\newtheorem*{theoremi}{Theorem I}
\newtheorem*{theoremip}{Theorem I$'$}
\newtheorem{prop}[thm]{Proposition}
\newtheorem{lem}[thm]{Lemma}
\newtheorem{cor}[thm]{Corollary}
\theoremstyle{definition}
\newtheorem{defn}[thm]{Definition}
\theoremstyle{remark}
\newtheorem{rmk}[thm]{Remark}
\begin{document}

\begin{abstract}
Let $G$ be one of the Ricci-flat holonomy groups $SU(n)$, $Sp(n)$, $Spin(7)$ or
$G_2$, and $M$ a compact manifold of dimension $2n$, $4n$, $8$ or $7$,
respectively.  We prove that the natural map from the moduli space of
torsion-free \gstr s on $M$ to the moduli space of Ricci-flat metrics
is open, and that the image is a smooth manifold.
For the exceptional cases $G = Spin(7)$ and $G_2$ we extend the result
to asymptotically cylindrical manifolds.
\end{abstract}

\maketitle

\section{Introduction}

The possible holonomy groups of simply-connected non-symmetric irreducible
Riemannian manifolds were classified by Berger \cite{berger55}.
`Berger's list' contains several infinite families, and the two exceptional
cases $Spin(7)$ and $G_{2}$, appearing as the holonomy of manifolds of
dimension $8$ and $7$ respectively.
In many cases, an effective approach to studying \gmetric s
(by which we mean metrics with holonomy contained in $G$)
is to define them in terms of certain closed differential forms, equivalent to
\emph{torsion-free \gstr s}. A \gstr{} defines a Riemannian metric,
and if its torsion vanishes (which is a first-order differential equation)
then the induced metric has holonomy contained in $G$.
For $G = SU(n)$, $Sp(n)$, $Spin(7)$ or $G_{2}$ we define a
\emph{\gmfd{}} to be a connected oriented manifold of dimension $2n$, $4n$, $8$
or $7$ respectively, equipped with a torsion-free \gstr{} and the associated
Riemannian metric.

\gmetric s are Ricci-flat for $G = SU(n)$, $Sp(n)$, $Spin(7)$ or $G_{2}$.
For compact manifolds M.Y. Wang \cite[Theorem 3.1]{wang91} proved a local
converse: any small Ricci-flat deformation of a \gmetric{} still has holonomy
contained in $G$.
In other words, the moduli space $\defmetg$ of \gmetric s is an open subset of
the moduli space $\defmetz$ of Ricci-flat metrics.
This is an analogue of a result of Koiso \cite{koiso83} on deformations of
Kähler-Einstein metrics.
Wang proves the result case by case, but asks if there is a general proof.

In this paper we observe that the problem can be reduced in a uniform way to
showing unobstructedness for deformations of torsion-free \gstr s. This has in
turn been given a uniform treatment by Goto \cite{goto04}. As part of the
proof we provide a clear summary of the deformation theory of Ricci-flat
metrics on a compact manifold (a special case of deformation theory for
Einstein metrics used by Koiso \cite{koiso83}). This treatment makes it
easier to extend the results to other types of complete manifolds, and we will
discuss the asymptotically cylindrical case in some detail.

If $M$ is a compact \gmfd{}
then the group $\mathcal{D}$ of diffeo\-morphisms of $M$ isotopic to the
identity acts on the space of torsion-free \gstr s by pull-backs.
The resulting quotient is the moduli space $\defstrg$ of torsion-free \gstr s
on $M$, and is known to be a manifold. 
This is due to Tian \cite{tian87} and Todorov \cite{todorov89} in the
Calabi-Yau ($G = SU(n)$) case, and Joyce in the exceptional cases
(see \cite[\S $10.4$, $10.7$]{joyce00}).
$\mathcal{D}$ also acts on the space of Riemannian metrics, and we let
$\defmetg$ and $\defmetz$ denote the moduli spaces of \gmetric s and Ricci-flat
metrics respectively.
In \S \ref{ricdefgsec} we prove

\begin{theoremi}
Let $G = SU(n)$, $Sp(n)$, $Spin(7)$ or $G_{2}$, and let $M$ be a compact \gmfd.
Then $\defmetg$ is open in $\defmetz$.
Moreover, $\defmetg$ is a smooth manifold and the natural map
\begin{equation*}
m : \defstrg \to \defmetg
\end{equation*}
that sends a torsion-free \gstr{} to the metric it defines is a submersion.
\end{theoremi}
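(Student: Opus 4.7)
The plan is to identify local slice models for both moduli spaces and show that the natural linear map between their tangent spaces is surjective. Fix a torsion-free \gstr{} $\phi$ with associated metric $g$. Standard slice theory for Einstein metrics realises a neighbourhood of $[g]\in\defmetz$ as the zero locus of a smooth nonlinear map $\Phi$ on a small ball in the finite-dimensional vector space $E$ of essential infinitesimal Einstein deformations, i.e.\ symmetric 2-tensors that are harmonic, trace-free and divergence-free. In parallel, by the smoothness of $\defstrg$ (Tian-Todorov in the Calabi-Yau case, Joyce in the exceptional cases, uniformly via Goto), a neighbourhood of $[\phi]\in\defstrg$ is smoothly parametrised by an open set in the finite-dimensional tangent space $T_{[\phi]}\defstrg$, which is a space of harmonic forms of the appropriate type.

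The first step is to compute the differential $dm_{[\phi]}\colon T_{[\phi]}\defstrg\to\Gamma(S^2T^*M)$: a deformation of $\phi$ by a harmonic form $\eta$ induces a metric deformation equal to the symmetric part (in the $G$-decomposition of the ambient tensor bundle) of the natural action of $\eta$ on $\phi$. Since any deformed metric is automatically Ricci-flat, the image of $dm_{[\phi]}$ lies in $E$.

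The main step, and the principal obstacle, is to show that $dm_{[\phi]}$ surjects onto $E$. The unifying mechanism is the existence of parallel forms, or equivalently parallel spinors, on a \gmfd: any such parallel object gives a $G$-equivariant isomorphism between $S^2_0T^*M$ and a summand of the form bundle that parametrises deformations of $\phi$, and correspondingly identifies harmonic symmetric 2-tensors in $E$ with harmonic forms in $T_{[\phi]}\defstrg$. Crucially, this relies on the Ricci-flat holonomy hypothesis; the analogous statement fails for generic holonomy groups on Berger's list, which is why the result is special to the groups considered.

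With $dm_{[\phi]}$ surjective and $\defstrg$ smooth, an implicit function argument applied to the Kuranishi-type model $\Phi^{-1}(0)$ shows that $m$ carries a neighbourhood of $[\phi]$ onto a neighbourhood of $[g]$ in $\defmetz$, so $\defmetg=\im m$ is open in $\defmetz$. Smoothness of $\defmetg$ and the submersion property of $m$ then follow by quotienting $\defstrg$ by the (integrable) kernel of $dm_{[\phi]}$, which corresponds to internal symmetries of the \gstr{} such as the $U(1)$-action rescaling the holomorphic volume form in the $SU(n)$ case.
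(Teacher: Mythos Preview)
Your overall strategy matches the paper's: pass to pre-moduli (slice) models for $\defstrg$ and $\defmetz$, show that the derivative of $m$ surjects onto the infinitesimal Ricci-flat deformations $\varepsilon(g)$, and conclude. The surjectivity step is where the presentations differ. You invoke parallel spinors to build a $G$-equivariant identification of $S^2_0 T^*M$ with a summand of $E_\chi$, which is essentially Wang's original case-by-case route. The paper instead uses the derivative $Dm_\chi : \Gamma(E_\chi) \to \Gamma(S^2T^*M)$ directly: it is pointwise surjective because $E_\chi = \glalg_{n}\chi$ and $m$ is $GL(\bbr^{n})$-equivariant, and it intertwines the Hodge Laplacian with $\triangle_L$ by the uniform principle (lemma~\ref{lichlem}, noted by Chern) that any $G$-equivariant bundle map commutes with Lichnerowicz Laplacians. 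Both arguments ultimately rest on this commutation, but the paper's formulation avoids constructing explicit inverses in each case and makes the uniformity manifest.

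There is a gap in your final step. You assert that smoothness of $\defmetg$ ``follows by quotienting $\defstrg$ by the kernel of $dm_{[\phi]}$,'' but \emph{a~priori} the slice theory only gives $\defmetz$ the structure of an orbifold near $[g]$: theorem~\ref{koisothm} identifies a neighbourhood with $\qcalz/\cali_{g}$, where $\cali_{g} \subseteq \cald$ is the isometry group. To obtain a manifold rather than an orbifold one must show that $\cali_{g}$ acts trivially on $\qcalz$. The paper's argument for this is that elements of $\cali_{g}$ are isotopic to the identity and hence act trivially on de~Rham cohomology; since the torsion-free \gstr s inducing $g$ are harmonic forms determined by their cohomology classes, $\cali_{g}$ fixes each of them, giving $\cali_{g} = \cali_{\chi}$. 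Propositions~\ref{autinvprop} and~\ref{isominvprop} then force $\cali_{g}$ to fix every $g' \in \qcalz$. Your description of the fibres of $m$ as ``internal symmetries of the \gstr'' is correct \emph{a~posteriori}, but establishing directly that the quotient by the kernel foliation is Hausdorff and smooth requires comparable work, which your sketch does not supply.
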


\begin{rmk}
It is easy to see that $\defmetg$ is also closed in $\defmetz$, so it is a
union of connected components.
It seems to be an open problem whether there exist \emph{any} compact Ricci-flat
manifolds without a holonomy reduction.
\end{rmk}

\begin{rmk}
The quotient of the space of \gmetric s by the group of \emph{all}
diffeomorphisms of $M$ (not just the ones isotopic to the identity)
is a quotient of~$\defmetg$ with discrete fibres and in general an orbifold
(cf. remark \ref{orbirmk}).
\end{rmk}

The case $G = G_{2}$ of theorem \cptopenthm{} was proved by
M.Y. Wang \cite[Theorem $3.1$B]{wang91}.
For $G = Sp(n)$ or $Spin(7)$, Wang showed that $\defmetg \subseteq \defmetz$ is
open (so the statement of theorem \cptopenthm{} is stronger).
Manifolds with holonomy in $SU(n)$ are Calabi-Yau manifolds, i.e. Ricci-flat
Kähler manifolds. The case $G = SU(n)$ of theorem \cptopenthm{} is therefore a
special case of a more general result by Koiso on Einstein deformations of
Kähler-Einstein metrics.

Let $X^{2n}$ be a compact Kähler-Einstein manifold. Koiso
\cite[Theorem $0.7$]{koiso83} shows that if the Einstein constant $e$
(equivalently the first Chern class $c_{1}(X)$) is non-positive and the complex
deformations of $X$ are unobstructed, then any small Einstein deformation of
the metric is Kähler with respect to some perturbed complex structure.
In other words, the map from the moduli space of Kähler-Einstein structures
to the moduli space of Einstein metrics is open
(see e.g. \cite[\S 12J]{besse87} for a discussion).
The proof shows that near any Kähler-Einstein metric there is a smooth
pre-moduli space of Einstein metrics, so that the moduli space of
Kähler-Einstein metrics is an orbifold.
Tian \cite{tian87} and Todorov \cite{todorov89} show that on a compact
Calabi-Yau manifold the obstructions to the complex deformations vanish.
Hence theorem \cptopenthm{} for $G = SU(n)$ follows from Koiso's theorem,
except for the claim that $\defmet_{SU(n)}$ is smooth (and not just an
orbifold).

\begin{rmk}
Dai, X. Wang and Wei \cite{dai05} use the fact that $\defmetg$ is open
in $\defmet$ to deduce that any scalar-flat deformation of a Ricci-flat
\gmetric{} on a compact manifold remains a \gmetric.
\end{rmk}

The proof of theorem \cptopenthm{} given in \S \ref{ricdefgsec} is a
simplification of Wang's argument for the case $G = G_{2}$. First, we observe
that the point-wise surjectivity of the derivative of $m$ follows from a
well-known property of Laplacians on manifolds with reduced holonomy noted by
Chern \cite{chern57}. This makes it easy to see that the proof applies also for
the other Ricci-flat holonomy groups, provided that the deformations of
torsion-free \gstr s are unobstructed.
Second, we streamline some parts of the deformation theory for Ricci-flat
metrics. This makes it easier to generalise the result to certain non-compact
settings.

One relevant type of complete non-compact Riemannian manifolds are
exponentially asymptotically cylindrical (EAC) ones
(defined in \S \ref{eacsub}).
In \S \ref{eacopensub} we explain that the necessary deformation theory for
Ricci-flat metrics and (at least when $G = Spin(7)$ or~$G_{2}$) torsion-free
\gstr s carries over to the EAC case, so that there are smooth moduli spaces
$\defstrg$ and $\defmetz$ of torsion-free EAC \gstr s and Ricci-flat EAC
metrics on an EAC \gmfd{} $M$.

\begin{theoremip}
Let $G = Spin(7)$ or $G_{2}$, and $M$ an EAC \gmfd.
Then $\defmetg$ is open in $\defmetz$.
Moreover, $\defmetg$ is a smooth manifold and the natural map
\begin{equation*}
m : \defstrg \to \defmetg
\end{equation*}
is a submersion.
\end{theoremip}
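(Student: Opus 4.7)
The plan is to replicate the argument for Theorem~\cptopenthm{} in the EAC category, using the deformation theory of Ricci-flat EAC metrics and of EAC torsion-free \gstr s summarised in \S\ref{eacopensub}. Those results provide smooth local pre-moduli spaces around a fixed EAC \gmfd{} $(M, \varphi_0, g_0)$, with the tangent spaces to $\defstrg$ and $\defmetz$ at $\varphi_0$ and $g_0$ realised as finite-dimensional spaces of weighted harmonic sections---of forms in the first case, and of transverse-traceless symmetric $2$-tensors in the second (modulo gauge). It then suffices to prove surjectivity of $dm_{\varphi_0}$ between these two spaces; openness of $\defmetg$ in $\defmetz$ and the submersion statement for $m$ follow at once from standard slice arguments.

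All analysis is set in exponentially weighted H\"older spaces $\holdad{k}$ whose decay rate $\delta > 0$ is chosen to be non-critical for each Laplacian appearing in the problem. To obtain the required surjectivity I would run Chern's observation exactly as in the compact case: the holonomy reduction to $G$ splits $S^2 T^*M$ fibrewise into subbundles associated to irreducible $G$-representations, and the subbundles carrying infinitesimal Ricci-flat metric deformations are isomorphic---via algebraic, parallel, hence Laplacian-commuting, bundle isomorphisms---to subbundles of forms whose harmonic sections model the tangent space of $\defstrg$. Thus the linearisation of $m$ is pointwise surjective as a bundle map, and weighted Hodge theory at rate $\delta$ allows this to descend to surjectivity on weighted harmonic sections.

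The main obstacle will be analytic bookkeeping. One needs a single weight $\delta$ that is simultaneously non-critical for all Laplacians in play (on forms of the relevant $G$-types, and on symmetric $2$-tensors), and one has to establish weighted Fredholm theory and Hodge decomposition case by case. One must also verify that the Chern-type correspondence between harmonic symmetric tensors and harmonic forms---classically phrased in the $L^2$ category---persists in the weighted category, which is why it is important that the parallel bundle isomorphism between the matching irreducible summands intertwines the two Laplacians exactly. These issues reduce to spectral questions on the asymptotic cross-section, itself a compact manifold with a parallel structure of one dimension lower, and are handled by the EAC Hodge theory developed in the preceding sections. Once this framework is in place the remaining algebraic and differential-geometric content is identical to the compact case, and Theorem~\eacopenthm{} follows.
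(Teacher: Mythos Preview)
Your outline matches the paper's argument: transplant the proof of Theorem~\cptopenthm{} to the EAC setting by invoking the EAC pre-moduli spaces (Proposition~\ref{cylgpremodprop} and the Ricci-flat slice construction of \S\ref{eacopensub}), and then use that the $G$-equivariant bundle map $Dm_{\chi}$ intertwines the Hodge and Lichnerowicz Laplacians (Lemma~\ref{lichlem}) so that harmonic sections of $E_{\chi}$ surject onto $\varepsilon(g_{\chi})$.

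Two points of precision are worth tightening. First, the tangent spaces are the \emph{bounded} harmonic sections---asymptotic to translation-invariant harmonic tensors on $X\times\bbr$---rather than sections decaying at rate $\delta$; the EAC moduli spaces allow the cross-section structure to vary, and the surjectivity of $Dm_{\chi}$ must be established on this larger space (this is exactly what Proposition~\ref{cylgpremodprop}(ii) and the description of $\varepsilon(g)$ at the end of \S\ref{eacopensub} give). Your phrase ``weighted harmonic sections at rate $\delta$'' risks suggesting only the decaying part, which would miss genuine EAC deformations. Second, the slice used in the paper is the Bianchi gauge $\ker(2\delta_{g}+d\,\tr_{g})$, not a transverse-traceless condition; on a Ricci-flat background both lead to the same infinitesimal space $\varepsilon(g)=\ker\triangle_{L}$, but the Bianchi gauge is what makes the slice argument and Lemma~\ref{delstarlem} go through cleanly in the EAC case.
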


In \cite{kovalev05} Kovalev proves the analogous result for EAC Calabi-Yau
manifolds, by an extension of Koiso's arguments for the compact Kähler-Einstein
case.
The discussion in subsection \ref{eacopensub} of deformations of EAC Ricci-flat
metrics is similar to that in \cite{kovalev05}, while the necessary results on
deformations of EAC \gstr s are taken from \cite{jn1}.

\begin{rmk}
One may consider the structure of the map $m$ in greater detail.
For the exceptional cases $G = G_{2}$ and $Spin(7)$, one can use the
characterisation of torsion-free \gstr s in terms of parallel spinors
(cf. M.Y. Wang \cite{wang89}) to show that $m$ is a diffeomorphism if the
holonomy of $M$ is exactly $G$ for any \gmetric{} (this depends only on the
topology of $M$), and that $\defstrg$ is in general a disjoint union of fibre
bundles over $\defmetg$ with real projective plane fibres (the components
corrspond to different spin structures on $M$). See \cite[\S 5.3]{jnthesis} for
details.
For Calabi-Yau manifolds, \cite[Theorem $12.103$]{besse87} states that the
moduli space of Calabi-Yau structures on a compact manifold is a locally
trivial fibration with compact fibres over the moduli space of Calabi-Yau
metrics (but does not describe the fibres further).
\end{rmk}

\smallskip

\noindent\emph{Acknowledgements.}
I am grateful to Alexei Kovalev for many helpful discussions.

\section{Preliminaries}
\label{prelimsec}

We describe how a metric with holonomy $G = Spin(7)$, $G_{2}$, $SU(n)$
or $Sp(n)$ can be defined in terms of a torsion-free \gstr. This is a set of
differential forms, that are both parallel and harmonic.
For more background on manifolds with special holonomy see e.g.
Joyce \cite{joyce00} or Salamon \cite{salamon89}.

\subsection{Holonomy}
\label{holsub}

We define the holonomy group of a Riemannian manifold.
For a fuller discussion of holonomy see e.g. \cite[Chapter $2$]{joyce00}.

\begin{defn}
Let $M^{n}$ be a manifold with a Riemannian metric $g$.
If $x \in M$ and $\gamma$ is a closed piecewise $C^{1}$ loop in $M$ based at
$x$ then the parallel transport around $\gamma$ (with respect to the Levi-Civita
connection) defines an orthogonal linear map
$P_{\gamma} : T_{x}M \to T_{x}M$. The \emph{holonomy group}
$Hol(g, x) \subseteq O(T_{x}M)$ at $x$ is the group generated
by $\{P_{\gamma} : \gamma \textrm{ is a closed loop based at } x\}$.
\end{defn}

If $x, y \in M$ and $\tau$ is a path from $x$ to $y$ we can define a
group isomorphism $Hol(g, x) \to Hol(g, y)$ by 
$P_{\gamma} \mapsto P_{\tau} \circ P_{\gamma} \circ P^{-1}_{\tau}$.
Provided that $M$ is connected we can therefore identify $Hol(g, x)$ with
a subgroup of $O(n)$, independently of $x$ up to conjugacy,
and talk simply of the holonomy group of~$g$.

There is a correspondence between tensors fixed by the holonomy group and
parallel tensor fields on the manifold.

\begin{prop}[{\cite[Proposition $2.5.2$]{joyce00}}]
\label{simpleholprop}
Let $M^{n}$ be a Riemannian manifold, $x \in M$
and $E$ a vector bundle on $M$ associated to $TM$.
If $s$ is a parallel section of $E$ then $s(x)$ is preserved by
$Hol(g, x)$.
Conversely if $s_{0} \in E_{x}$ is preserved by $Hol(g, x)$ then there is a
parallel section $s$ of $E$ such that $s(x) = s_{0}$.
\end{prop}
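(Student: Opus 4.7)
The plan is to use parallel transport as the bridge between pointwise stabilizers and global sections.

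For the first direction I would argue as follows. Since $E$ is a bundle associated to $TM$, its Levi-Civita connection is compatible with the one on $TM$, so parallel transport in $E$ along any path is the representation-theoretic image of parallel transport in $TM$ along the same path. If $\gamma$ is a closed loop based at $x$ and $s$ is a parallel section, then the parallel transport of $s(x)$ along $\gamma$ equals $s(x)$ itself, because a parallel section is, by definition of the connection, invariant under parallel transport along any path. Thus the associated action of $P_\gamma$ on $E_x$ fixes $s(x)$, so $s(x)$ is fixed by every generator of $Hol(g,x)$, hence by the whole group.

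For the converse, given $s_0 \in E_x$ fixed by $Hol(g,x)$, I would define a section $s$ by declaring $s(y)$ to be the parallel transport of $s_0$ along any piecewise $C^1$ path $\tau$ from $x$ to $y$; connectedness of $M$ guarantees such a $\tau$ exists. The first point to check is that this is independent of the choice of $\tau$: if $\tau'$ is another such path, then $\tau^{-1} \ast \tau'$ is a loop based at $x$, and the hypothesis that $Hol(g,x)$ fixes $s_0$ exactly says that parallel transport of $s_0$ around this loop returns $s_0$, which by composition of parallel transports is equivalent to $\tau$ and $\tau'$ giving the same endpoint value. Next I would verify smoothness: near any point $y$ one can choose a smoothly varying family of paths from $x$ to nearby points (e.g. by concatenating a fixed path to $y$ with geodesics emanating from $y$ in a normal neighbourhood), and parallel transport depends smoothly on such data via the ODE defining it. Finally, $s$ is parallel by construction: for any curve $\sigma$ in $M$, parallel transport of $s$ along $\sigma$ coincides with extending the chosen path by $\sigma$, so $\nabla s = 0$ at every point.

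The only real subtlety I foresee is the smoothness step; path-independence and being parallel follow essentially formally once one is careful about the associated-bundle formalism, but to get a smooth global $s$ from the pointwise recipe one needs to exhibit a locally smooth family of paths, which is the one place where the argument uses genuine differential geometry rather than just the group-theoretic definition of holonomy.
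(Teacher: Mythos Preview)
Your argument is correct and is the standard one. Note, however, that the paper does not actually give a proof of this proposition: it is stated with a citation to Joyce \cite[Proposition~2.5.2]{joyce00} and used as background, so there is no proof in the paper to compare against. Your write-up is essentially the proof one finds in Joyce's book --- define the candidate section by parallel transport, use the holonomy-invariance of $s_0$ for well-definedness, and check smoothness via a locally smooth choice of paths.
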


\begin{defn}
Let $M^{n}$ a manifold and $G \subseteq O(n)$ a closed subgroup.
A \emph{\gmetric} on $M$ is a metric with holonomy contained in $G$.
\end{defn}

\subsection{$Spin(7)$-structures}
\label{sp7sub}

The stabiliser in $GL(\bbr^{8})$ of
\begin{multline}
\label{sp7formeq}
\psi_{0} = dx^{1234} + dx^{1256} + dx^{1278} + dx^{1357} - dx^{1368}
-dx^{1458} - dx^{1467} - \\ dx^{2358} - dx^{2367} - dx^{2457} + dx^{2468}
+ dx^{3456} + dx^{3478} + dx^{5678} \in \Lambda^{4}(\bbr^{8})^{*}
\end{multline}
is $Spin(7)$ (identified with a subgroup of $SO(8)$ by the spin
representation).
For an oriented vector space $V$ of dimension $8$ let
$\lmsp V^{*} \subset \Lambda^{4}V^{*}$ be the subset of forms equivalent to
$\psi_{0}$ under some oriented linear isomorphism
$V \cong \bbr^{8}$.
A \emph{\spstr} on an oriented manifold $M^{8}$ is a section of the subbundle
$\spspace \subset \Lambda^{4}T^{*}M$.
Since $Spin(7) \subset SO(8)$ a \spstr{} $\psi$ naturally
defines a Riemannian metric $g_{\psi}$ on $M$. Note that $\psi$ is self-dual
with respect to this metric.

We make a note of the decomposition of $\Lambda^{4}\bbr^{8}$ into irreducible
representations of $Spin(7)$. Firstly it splits into the self-dual and
anti-self-dual parts $\Lambda^{4}_{\pm}\bbr^{8}$. We let
$\Lambda^{4}_{d}\bbr^{8}$ denote an irreducible component of rank $d$.
Then
\begin{subequations}
\label{spindecompeq}
\begin{gather}
\Lambda^{4}_{+}\bbr^{8} = \Lambda^{4}_{1}\bbr^{8} \oplus
\Lambda^{4}_{7}\bbr^{8} \oplus \Lambda^{4}_{27}\bbr^{8} , \\
\Lambda^{4}_{-}\bbr^{8} = \Lambda^{4}_{35}\bbr^{8} .
\end{gather}
\end{subequations}

The tangent space at $\psi$ to the space of \spstr s $\Gamma(\spspace)$
is $\Gamma(E_{\psi})$, where $E_{\psi} \subset \Lambda^{4}T^{*}M$ is
a $Spin(7)$-invariant linear subbundle. More precisely, the \spstr{} $\psi$
determines a decomposition of $\Lambda^{4}T^{*}M$ modelled on
(\ref{spindecompeq}),
and $E_{\psi} = \Lambda^{4}_{1\oplus7\oplus35}T^{*}M$.

A \spstr{} $\psi$ is \emph{torsion-free}
if it is parallel with respect to the metric it induces.
It follows immediately from proposition \ref{simpleholprop}
that a metric $g$ on $M^{8}$ has holonomy contained in $Spin(7)$ if
and only if it is induced by a torsion-free \spstr s.

The condition that $Hol(g) \subseteq Spin(7)$ imposes algebraic constraints on
the curvature of $g$. In particular any \spmetric{} is Ricci-flat
(see \cite[Corollary $12.6$]{salamon89}).
The torsion-free condition for $\psi$ can usefully be rewritten as $d\psi = 0$
(see \cite[Lemma $12.4$]{salamon89}).

\subsection{$G_{2}$-structures}

Recall that $G_{2}$ can be defined as the automorphism group of the normed
algebra of octonions. Equivalently, $G_{2}$ is the stabiliser in
$GL(\bbr^{7})$ of
\begin{equation}
\label{g2formeq}
\varphi_{0} = dx^{123} + dx^{145} + dx^{167} + dx^{246}
 - dx^{257} - dx^{347} - dx^{356} \in \Lambda^{3}(\bbr^{7})^{*} .
\end{equation}
For an oriented vector space $V$ of dimension $7$ let
$\lmgt V^{*} \subset \Lambda^{3}V^{*}$ be the subset consisting of forms
equivalent to $\varphi_{0}$ under some oriented linear isomorphism
$V \cong \bbr^{7}$.
A \emph{\gtstr} on an oriented manifold $M^{7}$ is a section $\varphi$ of the
subbundle $\gtspace \subset \Lambda^{3}T^{*}M$, and naturally
defines a Riemannian metric $g_{\varphi}$ on~$M$.

The typical fibre of $\gtspace$ is isomorphic to $GL(\bbr^{7})/G_{2}$, so by
dimension-counting $\gtspace$ is an open subbundle of $\Lambda^{3}T^{*}M$.
Thus the tangent space at $\varphi$ to the space of \gtstr s $\Gamma(\gtspace)$
is $\Omega^{3}(M) = \Gamma(E_{\varphi})$, if we let
$E_{\varphi} = \Lambda^{3}T^{*}M$.

A \gtstr{} $\varphi$ is \emph{torsion-free}
if it is parallel with respect to the metric it induces.
A metric $g$ on $M^{7}$ has holonomy contained in $G_{2}$ if
and only if it is induced by a torsion-free \gtstr.

\gtmetric s are Ricci-flat (see \cite[Proposition $11.8$]{salamon89}).
As observed by Gray, a \gtstr{} $\varphi$ is torsion-free if and only if
$d\varphi = 0$ and $d^{*}_{\varphi}\varphi = 0$ (where the
codifferential $d^{*}_{\varphi}$ is defined using the metric induced by
$\varphi$, see \cite[Lemma $11.5$]{salamon89}).

\subsection{$SU(n)$-structures}

Let $z^k = x^{2k-1} +ix^{2k}$ be complex coordinates on $\bbr^{2n}$. Then 
the stabiliser in $GL(\bbr^{2n})$ of the pair of forms
\begin{subequations}
\label{su3formseq}
\begin{gather}
\label{sl3formeq}
\Omega_{0} = dz^1 \wedge \cdots \wedge dz^n \in
\Lambda^{n}(\bbr^{2n})^{*} \otimes \bbc \\
\label{kahlerformeq}
\omega_{0} = {\textstyle\frac{i}{2}}(dz^1 \wedge d\bar z^1 + \cdots
dz^n \wedge d\bar z^n) \in \Lambda^{2}(\bbr^{2n})^{*}
\end{gather}
\end{subequations}
is $SU(n)$.
For an oriented real vector space $V$ of dimension $2n$
let $\lmsu V^{*} \subset \Lambda^{n}V^*_\bbc \oplus \Lambda^{2}V^{*}$ be the
subset of pairs $(\Omega,\omega)$ equivalent to $(\Omega_{0},\omega_{0})$
under some oriented isomorphism $V \cong \bbr^{2n}$.
An \emph{\sustr} on an oriented manifold $M^{2n}$ is a section $(\Omega,\omega)$
of the subbundle
$\suspace \subset \Lambda^{n}T^{*}_\bbc M \oplus \Lambda^{2}T^{*}M$.
It naturally defines an almost complex structure
and a Riemannian metric on~$M$, such that $\Omega$ has type $(n,0)$.
The volume form on $V$ is given
by both $(-1)^{\frac{n(n-1)}{2}}(\frac{i}{2})^n \Omega \wedge \bar\Omega$
and $\frac{1}{n!} \omega^{n}$ (cf. Hitchin \cite[\S 2]{hitchin97}).

An \sustr{} is \emph{torsion-free} if it is parallel with respect to the
metric it induces, and a metric on $M^{2n}$ has holonomy contained in
$SU(n)$ if and only if it is induced by a torsion-free \sustr.

$(\Omega, \omega)$ is torsion-free if and only if
$d\Omega = d\omega = 0$. Then the induced almost complex structure
is integrable, the Riemannian metric is a Ricci-flat Kähler metric,
and $\Omega$ is a holomorphic $(n,0)$-form.
$M^{2n}$ equipped with a torsion-free \sustr{} is called an
\emph{$SU(n)$-manifold} or \emph{Calabi-Yau \mbox{$n$-fold}}.

\subsection{$Sp(n)$-structures}

Let $q^k = x^{4k-3} +ix^{4k-2} + jx^{4k-1} + kx^{4k}$ be quaternionic
coordinates on~$\bbr^{4n}$. Then we may write
\[ dq^1 \wedge d\bar q^1 + \cdots + dq^n \wedge d\bar q^n
= -2(i\omega^I_0 + j\omega^J_0 + k\omega^K_0) , \]
with $\omega^I_0, \omega^J_0, \omega^K_0 \in \Lambda^2(\bbr^{4n})^*$.
The stabiliser in $GL(\bbr^{4n})$ of this triple of $2$-forms is $Sp(n)$.
For an oriented real vector space $V$ of dimension $4n$ let
$\lmspn V^* \subset (\Lambda^2 V^*)^{\otimes 3}$ be the subset of triples
$(\omega^I, \omega^J, \omega^K)$ equivalent to
$(\omega^I_0, \omega^J_0, \omega^K_0)$ under some oriented isomorphism
$V \cong \bbr^{4n}$.
An \emph{\spnstr{}} on an oriented manifold $M^{4n}$ is a section
of the subbundle $\spnspace \subset (\Lambda^2 T^*M)^{\otimes 3}$. It is
\emph{torsion-free} if it is parallel with respect to the induced metric, and a
metric on $M^{4n}$ has holonomy contained in $Sp(n)$ if and only if it is
induced by a torsion-free \spnstr.

Equivalently, $(\omega^I, \omega^J, \omega^K)$ is torsion-free if and only if
$d\omega^I = d\omega^J = d\omega^K = 0$. Then the metric of $M$ is Ricci-flat,
and $M$ has a triple $I, J, K$ of anti-commuting integrable complex structures, such that $\omega^I$ is the Kähler form and $\omega^J + i \omega^K$ a
holomorphic $(2,0)$-form with respect to $I$, etc.
$M^{4n}$ equipped with a torsion-free \spnstr{} is called an
\emph{$Sp(n)$-manifold} or \emph{hyperKähler manifold}.

\subsection{Laplacians}
\label{harmholsub}

For a Riemannian manifold with holonomy $H$ one can define a
\emph{Lichnerowicz Laplacian} on vector bundles associated to
the $H$-structure.
On differential forms this agrees with the usual Hodge Laplacian, as is
explained in Besse \cite[\S 1I]{besse87}.
This can be used to define decompositions of the spaces of harmonic forms
analogous to the Kähler decomposition on a Kähler manifold,
as observed by Chern \cite{chern57}.

Suppose a Riemannian manifold $M^{n}$ has holonomy group $Hol(M) \subseteq H$
(where $H$ is a closed subgroup of $O(n)$),
and a corresponding $H$-structure. Let $\rho : H \to GL(E)$ be a representation
of $H$, and $E_{\rho}$ the corresponding associated vector bundle.
Let $\halg_{\ad}$ be the vector bundle induced by the adjoint representation.
$\halg_{\ad}$ can be identified with a subbundle of~$\Lambda^{2}T^{*}M$,
and because $Hol(M) \subseteq H$ the Riemannian curvature tensor $R$ is a
(symmetric) section of $\halg_{\ad} \otimes \halg_{\ad}$.
We use the Lie algebra representation $D\rho : \halg \to End(E)$
to define
\[ (D\rho)^{2} : \halg \otimes \halg \to End(E), \;\:
a \otimes b \mapsto D\rho(a) \circ D\rho(b) . \]
This induces a bundle map $\halg_{\ad} \otimes \halg_{\ad} \to End(E_{\rho})$.
The symmetry of $R$ implies that $(D\rho)^{2}(R)$ is a self-adjoint section
of $End(E_{\rho})$.

\begin{defn}
\label{lichdef}
Let $M$ be a Riemannian manifold with $Hol(M) \subseteq H$ and $\rho$
a representation of $H$.
The \emph{Lichnerowicz Laplacian} on the associated vector bundle $E_{\rho}$
is the elliptic formally self-adjoint operator
\[ \triangle_{\rho} = \nabla^{*}\nabla - 2(D\rho)^{2}(R) \; : \;
\Gamma(E_{\rho}) \to \Gamma(E_{\rho}) , \]
where $\nabla$ is the connection on $E_{\rho}$ induced by the
Levi-Civita connection on $M$.
\end{defn}

\begin{lem}
\label{lichinstlem}
Let $M^{n}$ be a Riemannian manifold.
The Lichnerowicz Laplacian corresponding to the standard representation
of $O(n)$ on $\Lambda^{m}(\bbr^{n})^{*}$ is the usual Hodge Laplacian
$\triangle$ on $\Lambda^{m}T^{*}M$.
\end{lem}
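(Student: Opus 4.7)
The plan is to verify that Definition \ref{lichdef}, applied with $H = O(n)$ and $\rho$ the standard representation on $\Lambda^{m}(\bbr^{n})^{*}$, reproduces the classical Weitzenb\"ock formula for the Hodge Laplacian on $m$-forms (see Besse \cite[\S 1I]{besse87}), which states $\triangle = \nabla^{*}\nabla + \mathcal{R}_{m}$ for an explicit curvature endomorphism $\mathcal{R}_{m}$ of $\Lambda^{m}T^{*}M$.

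First I would unpack the derived representation. Since $\rho$ is the standard action, $D\rho(A) \in \operatorname{End}(\Lambda^{m}(\bbr^{n})^{*})$ for $A \in \soalg(n)$ is the derivation extension,
\[ D\rho(A)(\theta_{1} \wedge \cdots \wedge \theta_{m}) = \sum_{k=1}^{m} \theta_{1} \wedge \cdots \wedge (A\theta_{k}) \wedge \cdots \wedge \theta_{m}, \]
where $A$ is identified with its action on $(\bbr^{n})^{*}$ via the metric (consistent because $A^{T} = -A$).

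Second, in a local orthonormal frame $e_{1}, \ldots, e_{n}$ of $TM$ with dual coframe $e^{1}, \ldots, e^{n}$, I would expand the curvature tensor as a section
\[ R = \tfrac{1}{4} \sum R_{ijkl}\, (e^{i} \wedge e^{j}) \otimes (e^{k} \wedge e^{l}) \; \in \; \Gamma(\soalg_{\ad} \otimes \soalg_{\ad}) \]
and apply $D\rho$ to each tensor factor. Evaluating the resulting endomorphism on a basis element $e^{I} = e^{i_{1}} \wedge \cdots \wedge e^{i_{m}}$ yields a sum of terms, each involving a component $R_{i_{p}jk\ell}$ paired with contractions substituting indices in $e^{I}$, exactly matching the curvature endomorphism $\mathcal{R}_{m}$ in the classical Weitzenb\"ock formula up to the factor of $-2$ dictated by Definition~\ref{lichdef}.

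Third, since both operators agree on an arbitrary $m$-form at an arbitrary point, we conclude $\triangle_{\rho} = \triangle$. The main obstacle is bookkeeping: confirming the factor of~$2$ and the overall sign once the normalisations of $R$ as a symmetric element of $\soalg_{\ad} \otimes \soalg_{\ad}$ and as the Riemann tensor in the Weitzenb\"ock identity are matched. This can be done at a point in normal coordinates and reduces to combinatorics of the derivation action on wedge products.
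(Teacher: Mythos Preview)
Your proposal is correct in spirit and considerably more explicit than the paper's own proof, which consists entirely of the single sentence ``See \cite[Equation ($1.154$)]{besse87}.'' The paper treats the lemma as a known fact and defers to Besse; you instead sketch how to verify directly that the curvature term $-2(D\rho)^{2}(R)$ in Definition~\ref{lichdef} coincides with the curvature endomorphism in the Weitzenb\"ock formula for $m$-forms. Since the paper's Definition~\ref{lichdef} is itself modelled on the treatment in \cite[\S 1I]{besse87}, your computation is exactly what is needed to confirm the citation, and the outline you give (derivation action of $\soalg(n)$ on wedge products, expansion of $R$ in an orthonormal frame, matching normalisations) is the standard way to do it. The only caveat is that your proposal remains a plan rather than a completed calculation---the sign and factor-of-$2$ bookkeeping you flag as the main obstacle is not actually carried out---but that is arguably no less complete than the paper's bare citation.
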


\begin{proof}
See \cite[Equation ($1.154$)]{besse87}.
\end{proof}

\begin{lem}[{cf. \cite[Theorem $3.5.3$]{joyce00}}]
\label{lichlem}
Let $M$ be a Riemannian manifold with $Hol(M) \subseteq H$ and
\mbox{$\phi : E \to F$} an equivariant map of $H$-representations
$(E, \rho)$, $(F, \sigma)$. $\phi$ induces a bundle map
$E_{\rho} \to F_{\sigma}$, and the diagram below commutes.

\begin{diagram}[PostScript=Rikicki,balance]
\Gamma(E_{\rho})	&  \rTo^{\phi}  &  \Gamma(F_{\sigma}) \\
\dTo^{\triangle_{\rho}} &  		&  \dTo_{\triangle_{\sigma}} \\
\Gamma(E_{\rho})	&  \rTo^{\phi}  &  \Gamma(F_{\sigma}) 
\end{diagram}
In particular, if $\rho_{1}, \rho_{2}$ are $H$-representations then
$\triangle_{\rho_{1} \oplus \rho_{2}} = \triangle_{\rho_{1}}
\oplus \triangle_{\rho_{2}}$.
\end{lem}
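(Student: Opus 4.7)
The plan is to reduce both statements to the simple observation that an $H$-equivariant map of $H$-representations induces a \emph{parallel} bundle map between the corresponding associated bundles. Since $Hol(M) \subseteq H$, the Levi-Civita connection reduces to a connection on the holonomy subbundle of the frame bundle, and hence induces connections on each of $E_\rho$ and $F_\sigma$; because $\phi$ commutes with the $H$-action, the induced bundle map (which I will denote $\phi$ as well) is parallel in the sense that $\nabla_X \phi = 0$ for every vector field $X$, equivalently $\phi \circ \nabla = \nabla \circ \phi$ as operators $\Gamma(E_\rho) \to \Gamma(T^*M \otimes F_\sigma)$.

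Once this is in hand, the two summands of $\triangle_\rho$ are treated separately. The first summand $\nabla^*\nabla$ commutes with $\phi$ because $\phi$ commutes with $\nabla$ (and is thus also compatible with the formal adjoint, using that an equivariant isomorphism of representations is an isometry for invariant inner products). For the curvature summand, equivariance of $\phi$ means $\phi \circ D\rho(a) = D\sigma(a) \circ \phi$ for every $a \in \halg$; applying this twice gives
\[
\phi \circ D\rho(a) \circ D\rho(b) = D\sigma(a) \circ D\sigma(b) \circ \phi,
\]
so that $\phi \circ (D\rho)^2 = (D\sigma)^2 \circ \phi$ as maps $\halg \otimes \halg \to \mathrm{Hom}(E,F)$. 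Contracting with $R \in \Gamma(\halg_{\ad} \otimes \halg_{\ad})$ gives $\phi \circ (D\rho)^2(R) = (D\sigma)^2(R) \circ \phi$ on sections. Adding the two pieces yields $\phi \circ \triangle_\rho = \triangle_\sigma \circ \phi$, which is the commutativity of the diagram.

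The final assertion about direct sums is then immediate by applying this commutativity to the equivariant projections $\rho_1 \oplus \rho_2 \to \rho_i$ and inclusions $\rho_i \to \rho_1 \oplus \rho_2$, which forces $\triangle_{\rho_1 \oplus \rho_2}$ to preserve each summand and to restrict to $\triangle_{\rho_i}$ on it.

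The only real content beyond routine verification is the assertion that equivariant maps of $H$-representations induce parallel bundle maps. This is standard in the associated bundle formalism, but worth spelling out once: a section of $\mathrm{Hom}(E_\rho, F_\sigma)$ corresponds to an $H$-equivariant map from the holonomy bundle to $\mathrm{Hom}(E,F)$, and those which are constant (in particular, those coming from fixed $H$-equivariant maps such as $\phi$) are precisely the parallel ones. I do not anticipate a real obstacle; the lemma is essentially a functoriality statement for the associated-bundle construction combined with the manifestly natural form of definition~\ref{lichdef}.
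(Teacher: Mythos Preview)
Your proposal is correct and takes essentially the same approach as the paper: the paper's proof is the single sentence ``Clear from the fact that the Lichnerowicz Laplacian is defined naturally by the representations,'' and you have simply unpacked what this naturality means for the two summands $\nabla^{*}\nabla$ and $(D\rho)^{2}(R)$. One minor remark: the parenthetical about $\phi$ being an isometry is unnecessary (and $\phi$ need not be an isomorphism), since the commutation with $\nabla^{*}\nabla$ already follows from the local formula $\nabla^{*}\nabla = -\tr_{g}\nabla^{2}$ together with the fact that a parallel bundle map commutes with each $\nabla_{X}$.
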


\begin{proof}
Clear from the fact that the Lichnerowicz Laplacian is defined
naturally by the representations.
\end{proof}

Suppose that $\Lambda^{m}(\bbr^{n})^{*}$ splits as a direct sum of
representations of $H$. On a manifold $M$ with holonomy contained in $H$
there is a corresponding splitting of $\Lambda^{m}T^{*}M$ into
$H$-invariant subbundles.
Lemma \ref{lichlem} implies that the Hodge Laplacian commutes with the
projections to the subbundles. Hence there is also a decomposition for
the harmonic forms (see \cite[Theorem $3.5.3$]{joyce00}).

\subsection{Asymptotically cylindrical manifolds}
\label{eacsub}

A non-compact manifold $M$ is said to have \emph{cylindrical ends}
if $M$ is written as union of two pieces $M_{0}$ and $M_{\infty}$ with
common boundary $X$, where $M_{0}$ is compact,
and $M_{\infty}$ is identified with $X \times \bbrp$
by a diffeomorphism (identifying $\partial M_{\infty}$ with $X \times \{0\}$).
$X$~is called the \emph{cross-section} of~$M$.
Let $t$ be a smooth real function on $M$ which is the $\bbrp$-coordinate
on $M_{\infty}$, and negative on the interior of $M_{0}$.
A tensor field $s$ on $M$ is said to be \emph{exponentially asymptotic}
with rate $\delta > 0$ to a translation-invariant tensor $s_{\infty}$ on $M$
if $e^{\delta t} \norm{\nabla^{k}(s-s_{\infty})}$ is bounded
on $M_{\infty}$ for all $k \geq 0$,
with respect to a norm defined by an arbitrary Riemannian metric on $X$.

A metric on $M$ is called \emph{EAC} (exponentially asymptotically cylindrical)
if it is exponentially asymptotic to a product metric on $X \times \bbr$.
Similarly a \gstr{} is said to be EAC if it is exponentially asymptotic to a
translation-invariant \gstr{} on $X \times \bbr$ which defines a product
metric.
A diffeomorphism $\phi$ of $M$ is called EAC if it is exponentially
close to a product diffeomorphism $(x,t) \mapsto (\Xi(x), t + h)$ of
$X \times \bbr$ in a similar exponential sense.

\begin{rmk}
If an EAC metric has reduced holonomy then so does the induced metric on the
cross-section. In particular, the cross-section of an EAC \spmfd{} is a
compact \gtmfd, and the cross-section of an EAC \gtmfd{} is a Calabi-Yau
$3$-fold.
\end{rmk}

On an asymptotically cylindrical manifold $M$ it is useful to introduce
\emph{weighted Hölder norms}. Let $E$ be a vector bundle on $M$ associated
to the tangent bundle, $k \geq 0$, $\alpha \in (0,1)$ and
$\delta \in \bbr$. We define the $\holdad{k}$-norm of a section $s$ of $E$
in terms of the usual Hölder norm by
\begin{equation}
\norm{s}_{\holdad{k}} = \norm{e^{\delta t}s}_{\holda{k}} .
\end{equation}
Denote the space of sections of $E$ with finite $\holdad{k}$-norm
by $\holdad{k}(E)$.
Up to Lipschitz equivalence the weighted norms are independent of the choice
of asymptotically cylindrical metric, and of the choice of $t$ on the compact
piece $M_{0}$. In particular, the topological vector spaces $\holdad{k}(E)$ are
independent of these choices.

The main importance of the weighted norms is that elliptic asymptotically
translation-invariant operators acts as Fredholm operators on the weighted
spaces of sections.
In particular, this applies to the Hodge Laplacian of an EAC metric.

\begin{thm}
\label{lapindthm}
Let $M$ be an asymptotically cylindrical manifold.
If $\delta > 0$ with $\delta^{2}$ smaller than any positive eigenvalue
of the Laplacian on $X$ then
\begin{equation}
\label{lapmapeq}
\triangle : \holdax{k+2}{\pm\delta}(\Lambda^{m}T^{*}M) \to
\holdax{k}{\pm\delta}(\Lambda^{m}T^{*}M)
\end{equation}
is Fredholm for all $m$.
The index of \textup{(\ref{lapmapeq})} is $\mp (b^{m-1}(X) + b^{m}(X))$.
\end{thm}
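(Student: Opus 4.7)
The plan is to deduce this from the general theory of asymptotically translation-invariant elliptic operators on manifolds with cylindrical ends, due to Lockhart and McOwen (and developed further in the weighted H\"older setting by Melrose and others). That theory asserts that a translation-invariant elliptic operator of suitable order induces a Fredholm map $\holdax{k+2}{\delta}(E)\to\holdax{k}{\delta}(F)$ if and only if $\delta$ is not a \emph{critical weight}, meaning a value for which the indicial (model) operator on the cylinder admits a non-zero solution of the form $e^{-\delta t}\phi(x)$.

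First I would identify the critical weights for the Hodge Laplacian of the product metric on $X\times\bbr$. Decomposing a cylindrical $m$-form as $\omega=\alpha+dt\wedge\beta$ with $\alpha\in\Omega^{m}(X)$ and $\beta\in\Omega^{m-1}(X)$ depending on $t$, the Laplacian preserves this splitting and acts on each summand as $\triangle_{X}-\partial_{t}^{2}$. Hence $\triangle(e^{-\delta t}\phi)=0$ is equivalent to $\triangle_{X}\phi=\delta^{2}\phi$, so the critical weights are precisely $\pm\sqrt{\lambda}$ as $\lambda$ ranges over the eigenvalues of $\triangle_{X}$ on $m$-forms and on $(m-1)$-forms. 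The hypothesis $0<\delta^{2}<\lambda$ for all positive eigenvalues $\lambda$ guarantees that neither $\pm\delta$ is critical (the only critical weight in $(-\delta,\delta)$ being $0$), so the Fredholm claim for (\ref{lapmapeq}) follows.

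For the index I would combine two standard ingredients. The Lockhart-McOwen relative-index theorem states that as the weight decreases through a critical value, the index of (\ref{lapmapeq}) jumps by the dimension of the space of polynomially bounded solutions of the indicial equation at that value. Crossing $0$, separation of variables identifies this space with $\harm^{m}(X)\oplus t\cdot\harm^{m}(X)\oplus(dt\wedge\harm^{m-1}(X))\oplus t\cdot(dt\wedge\harm^{m-1}(X))$, of total dimension $2(b^{m-1}(X)+b^{m}(X))$. Separately, because $\triangle$ is formally self-adjoint the $L^{2}$ pairing identifies the cokernel of (\ref{lapmapeq}) at weight $\delta$ with the kernel at weight $-\delta$, so $\ind_{\delta}=-\ind_{-\delta}$. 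Combining these yields $\ind_{+\delta}=-(b^{m-1}(X)+b^{m}(X))$ and $\ind_{-\delta}=+(b^{m-1}(X)+b^{m}(X))$, as required.

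The main obstacle is the bookkeeping at the critical weight $0$: one must check that the polynomially bounded solutions of $(\triangle_{X}-\partial_{t}^{2})\phi=0$ are exactly the degree-$\leq 1$ polynomials in $t$ with harmonic-form coefficients (no quadratic or higher contributions, by iterating the fact that $\partial_{t}^{2}\phi$ must itself be harmonic in $X$ and polynomial in $t$), and that the jump in index across $\delta=0$ realises the full dimension of this space. Once the indicial analysis is set up, the Fredholm conclusion and the index computation are both essentially formal consequences of the Lockhart-McOwen machinery.
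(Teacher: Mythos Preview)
Your proposal is correct and follows the same route as the paper: the paper's proof consists entirely of citations to Lockhart--McOwen \cite{lockhart85} for the Fredholm statement and to Lockhart \cite{lockhart87} (or Melrose \cite{melrose94}) for the index formula, and what you have written is precisely a sketch of how those results specialise to the Hodge Laplacian. Your identification of the critical weights via $\triangle_{X}-\partial_{t}^{2}$, the duality $\ind_{\delta}=-\ind_{-\delta}$ from formal self-adjointness, and the wall-crossing count of $2(b^{m-1}(X)+b^{m}(X))$ at the critical weight $0$ are all standard and correct.
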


\begin{proof}
The Fredholm result is a special case of Lockhart and McOwen
\cite[Theorem 6.2]{lockhart85},
while the index formula can be found in Lockhart \cite[\S 3]{lockhart87} (or
Melrose \cite[\S 6.4]{melrose94}).
\end{proof}

This can be used to deduce results analogous to Hodge theory for compact
manifolds. Let $\harm^m_0$ denote the space of bounded harmonic $m$-forms on
$M$, and $\harm^m_\infty$ the translation-invariant harmonic forms on
$X \times \bbr$. $\harm^m_\infty = \harm^m_X \oplus dt \wedge \harm^{m-1}_X$,
where $\harm^m_X$ are the harmonic forms on $X$. Any $\phi \in \harm^m_0$ is
asymptotically translation-invariant; let $B(\phi) \in \harm^m_\infty$ denote
its limit. We can write $B(\phi) = B_a(\phi) + dt \wedge B_e(\phi) \in
\harm^m_X \oplus dt \wedge \harm^{m-1}_X$. Then
\[ \harm^m_0 = \harm^m_{abs} \oplus \harm^m_E , \]
where $\harm^m_{abs}$ is the kernel of $B_e : \harm^m_0 \to \harm^{m-1}_X$,
and $\harm^m_E \subset \harm^m_0$ is the subspace of exact forms.

\begin{thm}
\label{absderhamthm}
Let $M$ be an EAC manifold. The natural
map $\harm^{m}_{abs} \to H^{m}(M)$ is an isomorphism.
\end{thm}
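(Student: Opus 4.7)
The plan is to exploit the direct-sum decomposition $\harm^{m}_{0} = \harm^{m}_{abs} \oplus \harm^{m}_{E}$ stated just above the theorem. Injectivity is then immediate: if $\phi \in \harm^{m}_{abs}$ represents the zero class in $H^{m}(M)$, it is exact as a differential form, so $\phi \in \harm^{m}_{E}$ by definition, and the direct sum forces $\phi = 0$. For surjectivity, the same decomposition reduces the problem to showing that every class $c \in H^{m}(M)$ has at least \emph{some} representative in $\harm^{m}_{0}$: decomposing such a $\phi = \phi_{a} + \phi_{e}$ with $\phi_{e}$ exact yields $[\phi_{a}] = c$ and $\phi_{a} \in \harm^{m}_{abs}$.

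To produce a bounded harmonic representative of $c$, I would first take any smooth closed representative $\omega_{0}$, which (being asymptotically translation-invariant) approaches some closed $\alpha_{\infty} + dt \wedge \beta_{\infty}$ on the end, with $\alpha_{\infty}, \beta_{\infty}$ closed on $X$. Subtracting exact forms of the shape $d(\chi \gamma)$ and $-d(\chi\, dt \wedge \gamma')$ (where $\chi$ is a cutoff equal to $1$ on the end and $\gamma, \gamma'$ are primitives on $X$ for the differences between $\alpha_{\infty}$, $\beta_{\infty}$ and their harmonic representatives) yields a closed representative $\omega \in c$ that is exponentially asymptotic to a translation-invariant \emph{harmonic} form $\omega_{\infty} \in \harm^{m}_{\infty}$; in particular $d^{*}\omega_{\infty} = 0$ on the cylinder.

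Next I would correct $\omega$ to a harmonic form via Theorem \ref{lapindthm}. Setting $f := d^{*}\omega \in \holdax{k}{\delta}(\Lambda^{m-1}T^{*}M)$ for some small $\delta > 0$ (it decays exponentially since $d^{*}\omega_{\infty} = 0$), the plan is to solve $\triangle \alpha = f$ on $\holdax{k+2}{\delta}(\Lambda^{m-1}T^{*}M)$ and set $\phi := \omega - d\alpha$. One verifies $d\phi = 0$ directly and $\triangle \phi = \triangle \omega - d\triangle \alpha = \triangle \omega - dd^{*}\omega = d^{*}d\omega = 0$; moreover $d^{*}\alpha$ is $L^{2}$ and $\triangle$-harmonic, hence coclosed, forcing $d^{*}\phi = dd^{*}\alpha = 0$. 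Since $\alpha$ decays, $\phi$ is bounded with the same asymptotic limit $\omega_{\infty}$ as $\omega$, so lies in $\harm^{m}_{0}$ and represents $c$. Solvability of $\triangle \alpha = f$ requires $f$ to be $L^{2}$-orthogonal to the cokernel of the Fredholm map; for $\eta$ in the cokernel, integration by parts on the truncated manifold $M_{R}$ gives $\linner{d^{*}\omega, \eta} = \linner{\omega, d\eta} - \lim_{R \to \infty}\int_{X \times \{R\}} \omega \wedge *\eta$, in which the first term vanishes because $d\eta = 0$ and the boundary integral vanishes using the asymptotic behaviour of $\eta$ together with the harmonicity of $\omega_{\infty}$.

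The hardest step will be the precise identification and pairing analysis for the cokernel of $\triangle$ on the positive weighted space, which includes not only decaying harmonic forms but also those associated to \emph{extended} solutions growing linearly in $t$; the vanishing of the corresponding boundary obstructions depends delicately on $\omega_{\infty}$ being harmonic and on careful bookkeeping of the Hodge star in the cylindrical splitting. A more economical alternative is to invoke a full Hodge-type decomposition for weighted spaces on EAC manifolds, of Lockhart-McOwen or Melrose type, and apply it directly to $\omega$: closedness of $\omega$ together with the asymptotic harmonicity of $\omega_{\infty}$ then forces the coclosed summand in the decomposition to vanish, producing the desired $\phi \in \harm^{m}_{0}$ representing $c$ without further manipulation.
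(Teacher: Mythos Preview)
The paper does not prove this theorem: it is stated without proof as background Hodge theory on EAC manifolds, in the same breath as the Fredholm results attributed to Lockhart--McOwen and Melrose. So there is no in-paper argument to compare against; your ``alternative'' of citing a weighted Hodge decomposition from that literature is precisely what the paper is implicitly doing.

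Your direct argument is the standard shape, and the reduction via $\harm^{m}_{0} = \harm^{m}_{abs} \oplus \harm^{m}_{E}$ is clean. Two issues, though. The small one: an arbitrary closed form on an EAC manifold is not automatically asymptotically translation-invariant; you first have to manufacture such a representative (restrict $c$ to the end, pick a translation-invariant representative there, and patch), which is easy but should be said. The real gap is in the cokernel step. The cokernel of $\triangle$ on $\holdax{k+2}{\delta}(\Lambda^{m-1})$ is the kernel of $\triangle$ on $\holdax{k+2}{-\delta}(\Lambda^{m-1})$, which contains \emph{extended} harmonic forms $\eta$ asymptotic to $\eta_{0} + t\eta_{1}$ with $\eta_{0}, \eta_{1} \in \harm^{m-1}_{\infty}$. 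For such $\eta$ one computes $d\eta \to dt \wedge \eta_{1}$ at infinity, so your assertion that ``the first term vanishes because $d\eta = 0$'' is simply false when $\eta_{1} \neq 0$. In that case both $\int_{M_{R}} \langle \omega, d\eta \rangle$ and the boundary integral grow linearly in $R$, and the required cancellation is exactly the delicate pairing computation you allude to but do not perform. It can be carried out (using that $\omega_{\infty}$ is harmonic on $X$ and the explicit cylindrical Hodge star), or sidestepped by solving on the enlarged domain $\holdax{k+2}{\delta}(\Lambda^{m-1}) \oplus \rho\,\harm^{m-1}_{\infty}$, where the index is zero and the cokernel consists only of bounded harmonic forms, for which your integration-by-parts argument \emph{does} go through.
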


Dually $\harm^m_E$ is isomorphic to the kernel of the homomorphism
$\inc : H^m_{cpt}(M) \to H^m(M)$ induced by the natural chain inclusion
$\Omega^*_{cpt}(M) \to \Omega^*(M)$. 
If $M$ has a single end (i.e. the cross-section $X$ is connected) then the
long exact sequence for relative cohomology of $(M,X)$ shows that
$\inc : H^{1}_{cpt}(M) \to H^{1}(M)$ is injective. Hence

\begin{cor}
\label{degonehodgecor}
Let $M^{n}$ be an asymptotically cylindrical manifold which has a single end
(i.e. the cross-section $X$ is connected). Then $\harm^{1}_{E} = 0$, and
$\harm^{1}_{0} \to H^{1}(M)$ is an isomorphism.
\end{cor}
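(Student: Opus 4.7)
The plan is simply to assemble the ingredients already laid out in the text immediately preceding the statement, since most of the work has been done. The starting point is the orthogonal decomposition $\harm^1_0 = \harm^1_{abs} \oplus \harm^1_E$, so the corollary reduces to two assertions: that $\harm^1_E = 0$, and that $\harm^1_{abs} \to H^1(M)$ is an isomorphism. The latter is exactly the $m=1$ case of Theorem \ref{absderhamthm}, so I would focus entirely on vanishing of $\harm^1_E$.

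For that, I would invoke the duality identification (stated just above the corollary) of $\harm^1_E$ with the kernel of $\inc : H^1_{cpt}(M) \to H^1(M)$. Then I would write down the relevant segment of the long exact sequence in relative cohomology of the pair $(M,X)$, namely
\begin{equation*}
H^0(M) \to H^0(X) \to H^1_{cpt}(M) \xrightarrow{\inc} H^1(M) .
\end{equation*}
Since $M$ is connected and $X$ is connected by hypothesis, both $H^0(M)$ and $H^0(X)$ are one-dimensional and the restriction map between them is surjective (it sends the constant function $1$ to $1$). Exactness forces the connecting homomorphism $H^0(X) \to H^1_{cpt}(M)$ to vanish, so $\inc$ is injective and hence $\harm^1_E \cong \ker \inc = 0$.

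Combining the two facts, $\harm^1_0 = \harm^1_{abs}$ and Theorem \ref{absderhamthm} gives the isomorphism $\harm^1_0 \to H^1(M)$.

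There is no real obstacle here: the exponential decay analysis which makes the decomposition and duality meaningful has been carried out in the preceding paragraphs, and the topological input is a one-line exact-sequence argument. The only small point requiring care is the connectedness of $M$, which is part of the standing assumption on manifolds in this paper, so that $H^0(M) \to H^0(X)$ is indeed surjective.
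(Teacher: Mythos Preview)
Your proposal is correct and follows essentially the same route as the paper: the paper also reduces to showing $\inc : H^{1}_{cpt}(M) \to H^{1}(M)$ is injective via the long exact sequence of the pair $(M,X)$, combined with the decomposition $\harm^1_0 = \harm^1_{abs} \oplus \harm^1_E$ and Theorem~\ref{absderhamthm}. You have simply spelled out the exact-sequence step in slightly more detail.
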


\section{Ricci-flat deformations of $G$-metrics}
\label{ricdefgsec}

\subsection{Deformations of $G$-metrics}
\label{gdefsub}

Let $G$ be one of the Ricci-flat holonomy groups $SU(n)$, $Sp(n)$, $Spin(7)$
or $G_{2}$, and $M$ a compact \gmfd.
We explained in \S \ref{prelimsec} how a \gmetric{} on a manifold $M$
of the appropriate dimension can be defined in terms of a \gstr, i.e.
a section of a subbundle $\gspace \subset \Lambda^{*}T^{*}M$, which is
torsion-free and in particular closed.
In order to prove theorem \cptopenthm{} we will use that deformations
of \gstr s are unobstructed, and the existence of \emph{pre-moduli spaces}.

The tangent space to $\Gamma(\gspace)$ at a \gstr{} $\chi$
consists of the sections of the bundle of point-wise tangents to $\gspace$
at $\chi$, which is a vector bundle $E_{\chi} \subseteq \Lambda^{*}T^{*}M$
associated to the \gstr.
$E_{\chi}$ is a bundle of forms, so the Hodge Laplacian acts
on~$\Gamma(E_{\chi})$. When $\chi$ is torsion-free this is the same as
the Lichnerowicz Laplacian from \S \ref{harmholsub}.

The group $\mathcal{D}$ of diffeo\-morphisms of $M$ isotopic to the
identity acts on the space of torsion-free \gstr s by pull-backs
and the quotient is the moduli space $\defstrg$ of torsion-free \gstr s.
Goto \cite{goto04} proves that the deformations of torsion-free \gstr s
are unobstructed in the following sense:

\begin{prop}
\label{gpremodprop}
Let $G = SU(n)$, $Sp(n)$, $Spin(7)$ or $G_{2}$, $M$ a compact \gmfd,
and $\chi$ a torsion-free \gstr{} on $M$.
Then there is a submanifold $\qcalg$ of the space of $C^{1}$ \gstr s such that
\begin{enumerate}
\item \label{smoothitem}
the elements of $\qcalg$ are smooth torsion-free \gstr s,
\item the tangent space to $\qcalg$ at $\chi$ is the space of harmonic
sections of $E_{\chi}$,
\item the natural map $\qcalg \to \defstrg$ is a homeomorphism onto a
neighbourhood of $\chi\mathcal{D}$ in~$\defstrg$.
\end{enumerate}
\end{prop}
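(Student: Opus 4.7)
The plan is to build $\qcalg$ as a Kuranishi-type slice for the $\mathcal{D}$-action on the space of torsion-free \gstr s, and to read off the three properties directly from the construction together with Goto's unobstructedness theorem.

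First, fix $k \ge 1$ and $\alpha \in (0,1)$ and parameterise \gstr s near $\chi$ as $\chi + \eta$ with $\eta \in \holda{k+1}(E_\chi)$ small. Denote the torsion-free equation schematically by $T(\chi+\eta) = 0$; in each of the four cases this is the vanishing of a collection of closed and, for $G_2$, coclosed forms on $M$. Linearising at $\chi$ and using that $\chi$ is parallel, one finds that $\triangle$ preserves $\Gamma(E_\chi)$ (this uses lemmas \ref{lichinstlem} and \ref{lichlem}), so on a compact manifold we have the Hodge decomposition
\[ \holda{k+1}(E_\chi) = \harm(E_\chi) \oplus \triangle\, \holda{k+3}(E_\chi) . \]
Impose the Bianchi-type gauge condition $d^*\eta = 0$, supplemented in the $G_2$ case by requiring the appropriate coclosedness so that the combined torsion-free-plus-gauge system is an elliptic first-order system, and transverse to the infinitesimal $\mathcal{D}$-action on $\chi$.

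On this slice, applying the implicit function theorem to $T$ produces, for every sufficiently small harmonic $h \in \harm(E_\chi)$, a unique correction $\xi(h) \in (\ker \triangle)^\perp$ with $\xi(0) = 0$ and $T(\chi + h + \xi(h))$ lying in the finite-dimensional obstruction space cut out by the cokernel of the linearised equation. Goto's theorem \cite{goto04} asserts that this obstruction vanishes, so $h \mapsto \chi + h + \xi(h)$ is defined on a neighbourhood of $0$ in $\harm(E_\chi)$ and its image is the required $\qcalg$. Smoothness of its elements (item (\ref{smoothitem})) then follows from elliptic regularity applied to the torsion-free-plus-gauge system, and the derivative of the parameterisation at $\chi$ is the identity on $\harm(E_\chi)$, giving (ii).

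For (iii), a standard slice argument applies: because the gauge condition is transverse to the image of $X \mapsto \mathcal{L}_X \chi$, the map $\mathcal{D} \times \qcalg \to \{\text{torsion-free \gstr s}\}$, $(\phi, q) \mapsto \phi^*q$, is a local diffeomorphism at $(\mathrm{id}, \chi)$ by the implicit function theorem, and hence $\qcalg \to \defstrg$ is a homeomorphism onto a neighbourhood of $\chi\mathcal{D}$. The main technical obstacle is arranging a single gauge that simultaneously makes the torsion-free system elliptic and transverse in all four cases, the $G_2$ case being the delicate one because of the coclosedness requirement, and then verifying that Goto's obstruction space really coincides with the cokernel appearing in the slice construction. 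Once these match up, the remainder is routine Banach-manifold bookkeeping.
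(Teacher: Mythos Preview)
The paper does not give its own proof of this proposition: it is stated as a citation of Goto \cite{goto04} (with the $SU(n)$ case attributed in the introduction to Tian--Todorov and the exceptional cases to Joyce). So there is nothing to compare against beyond ``see Goto''.

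Your sketch is a reasonable outline of the Kuranishi-type argument that underlies those references, and you correctly identify Goto's unobstructedness as the essential input. A couple of points deserve care if you want to turn this into a self-contained proof. First, for $SU(n)$ and $Sp(n)$ the structure $\chi$ is a tuple of forms of different degrees, and the linearised torsion-free equation is not simply $d\eta = 0$ on a single bundle; the gauge-fixing and ellipticity have to be set up for the coupled system (Goto's formalism of ``closed geometric structures'' handles this uniformly). Second, the gauge $d^*\eta = 0$ is not by itself transverse to the $\mathcal{D}$-orbit in all cases, nor does it render the torsion-free system elliptic without further conditions; you flag this for $G_2$, but the same issue arises for the other groups, and one typically uses instead the Hodge-theoretic slice (harmonic representatives in each cohomology class) rather than a first-order gauge condition. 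Finally, your claim that ``Goto's obstruction space coincides with the cokernel appearing in the slice construction'' is exactly the content that needs checking and is where the work lies; it is not automatic bookkeeping.
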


The spaces $\qcalg$ are pre-moduli spaces of torsion-free \gstr s and
can be used as coordinate charts for~$\defstrg$, which is thus a smooth
manifold.
The pre-moduli space $\qcalg$ near $\chi$ can be chosen to be invariant under
the stabiliser $\chi$. In fact

\begin{prop}
\label{autinvprop}
Let $\chi \in \mathcal{X}$, and let $\cali_{\chi} \subseteq \cald$ be the
stabiliser of $\chi$.
If $\qcalg$ is $\cali_{\chi}$-invariant and small enough then
$\cali_{x}$ acts trivially on $\qcalg$
and $\mathcal{I}_{\chi'} = \mathcal{I}_{x}$ for all $\chi' \in \qcalg$.
\end{prop}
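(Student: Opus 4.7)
The plan is to establish the two assertions in turn: first, that $\cali_\chi$ acts trivially on $\qcalg$, and second, that $\cali_{\chi'} = \cali_\chi$ for every $\chi' \in \qcalg$.

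The first assertion is straightforward. Take $\phi \in \cali_\chi$ and $\chi' \in \qcalg$; by the assumed $\cali_\chi$-invariance of $\qcalg$ one has $\phi^*\chi' \in \qcalg$, while the pair $\phi^*\chi'$ and $\chi'$ clearly lie in the same $\cald$-orbit, hence represent the same point in $\defstrg$. The injectivity of the chart $\qcalg \to \defstrg$ provided by Proposition \ref{gpremodprop}(iii) then forces $\phi^*\chi' = \chi'$. As an immediate consequence, $\cali_\chi \subseteq \cali_{\chi'}$ for every $\chi' \in \qcalg$.

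The reverse inclusion $\cali_{\chi'} \subseteq \cali_\chi$ is where the real work lies. The starting observation is that $\cali_\chi$ is a compact Lie group: it is a closed subgroup of $\mathrm{Isom}(g_\chi) \cap \cald$, and the isometry group of a compact Riemannian manifold is compact by Myers--Steenrod. The plan is to appeal to Ebin's slice theorem for the $\cald$-action on the space of Riemannian metrics at $g_\chi$. After shrinking $\qcalg$ so that every induced metric $g_{\chi'}$ lies in a small neighbourhood of $g_\chi$, Ebin's theorem yields that $\mathrm{Isom}(g_{\chi'}) \cap \cald$ is conjugate in $\cald$ to a subgroup of $\mathrm{Isom}(g_\chi) \cap \cald$. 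Given $\phi \in \cali_{\chi'}$, the pulled-back structure $\phi^*\chi$ lies in the $\cald$-orbit of $\chi$ and is close to $\chi$; the injectivity of $\qcalg \to \defstrg$ then implies $\phi^*\chi$ is linked to $\chi$ by an element of $\cald$ close to the identity, and the compactness of the relevant isometry groups allows one to identify this element with an element of $\cali_\chi$, forcing $\phi \in \cali_\chi$.

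The main obstacle is to prevent the stabiliser from jumping upward at non-generic points of $\qcalg$: part~(a) gives only the inclusion $\cali_\chi \subseteq \cali_{\chi'}$, and there is no a priori reason (from pre-moduli considerations alone) for the inclusion to be an equality. The essential analytic input is the rigidity of isometry groups under small metric perturbations, equivalently the upper semi-continuity of the dimension of the space of Killing fields. For elements of $\cali_{\chi'}$ lying outside the identity component, a standard Arzel\`a--Ascoli compactness argument applied to a sequence $\phi_n \in \cali_{\chi'_n}$ with $\chi'_n \to \chi$ extracts a subsequential limit which must lie in $\cali_\chi$; shrinking $\qcalg$ sufficiently then ensures the $\phi_n$ themselves must already belong to $\cali_\chi$.
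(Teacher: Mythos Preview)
Your argument for the first assertion is correct and in fact cleaner than the paper's. You use the injectivity of the chart $\qcalg \to \defstrg$ from Proposition~\ref{gpremodprop}(iii) directly: $\phi^*\chi'$ and $\chi'$ both lie in $\qcalg$ and in the same $\cald$-orbit, hence coincide. The paper instead uses Proposition~\ref{gpremodprop}(ii): since $T_\chi\qcalg$ consists of harmonic forms, a neighbourhood of $\chi$ in $\qcalg$ immerses into de~Rham cohomology; elements of $\cali_\chi \subseteq \cald$ are isotopic to the identity and therefore act trivially on cohomology, so they fix $\qcalg$. Both routes are valid; yours is shorter, while the paper's makes the underlying geometric reason (triviality of the homotopy action on cohomology) more explicit.

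For the reverse inclusion, you correctly identify Ebin's slice theorem as the key input, and indeed the paper's entire argument is a one-line citation of \cite[Theorem~7.1(2)]{ebin70}. However, the elaboration you give after invoking Ebin is muddled and unnecessary. The specific claim ``$\phi^*\chi$ lies in the $\cald$-orbit of $\chi$ and is close to $\chi$'' requires justification you do not supply (why should $\phi$, which could be far from the identity, move $\chi$ only slightly?), and the subsequent appeal to injectivity of $\qcalg \to \defstrg$ does not yield what you want, since $\phi^*\chi$ is not known to lie in $\qcalg$. The Arzel\`a--Ascoli paragraph at the end is a reasonable heuristic for why such a result should hold, but it is not a proof; it is precisely the content that Ebin's theorem packages. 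The correct move, which the paper makes, is simply to invoke Ebin's result and stop.
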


\begin{proof}
Because the tangent space to $\qcalg$ consists of harmonic forms, a
neighbourhood of $\chi$ can be immersed in (a direct sum of copies of) the
de Rham cohomology of $M$. Because elements of $\cali_{\chi}$ act trivially on
cohomology they must fix such a neighbourhood.
The reverse inclusion $\cali_{\chi'} \subseteq \cali_{\chi}$ follows from
\mbox{\cite[Theorem $7.1(2)$]{ebin70}}.
\end{proof}

\subsection{Killing vector fields}
\label{killsub}

Before we discuss the deformations of Ricci-flat metrics we make some remarks
about \emph{Killing vector fields}. These are the infinitesimal isometries of a
Riemannian manifold $(M,g)$,
i.e. vector fields $V$ such that the Lie derivative
$\mathcal{L}_{V}g$ vanishes.

\begin{defn}
Given a metric $g$ on $M$ let
$\delta^{*} : \Omega^{1}(M) \to \Gamma(S^{2}(T^{*}M))$ be the symmetric part of
the Levi-Civita connection
$\nabla : \Omega^{1}(M) \to \Gamma(T^{*}M \otimes T^{*}M)$.
\end{defn}

\noindent
The formal adjoint $\delta$ of $\delta^{*}$ is the restriction of
$\nabla^{*} : \Gamma(T^{*}M \otimes T^{*}M) \to \Omega^{1}(M)$ to
the symmetric part $\Gamma(S^{2}(T^{*}M))$.

\begin{prop}[{\cite[Lemma $1.60$]{besse87}}]
\label{delstarprop}
Let $g$ be a Riemannian metric on a manifold $M$ and $V$ a vector field.
Then $\mathcal{L}_{V}g = 2\delta^{*}V^{\flat}$, where $V^{\flat}$ denotes the
$1$-form $g(V, \cdot)$.
\end{prop}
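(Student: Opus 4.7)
The plan is to prove the identity pointwise by directly unpacking both sides in terms of the Levi-Civita connection $\nabla$, exploiting the two defining properties of $\nabla$: metric compatibility ($Xg(Y,Z) = g(\nabla_X Y, Z) + g(Y, \nabla_X Z)$) and vanishing torsion ($[X,Y] = \nabla_X Y - \nabla_Y X$).

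First I would expand the left-hand side using the standard formula for the Lie derivative of a $(0,2)$-tensor,
\[
(\mathcal{L}_V g)(X,Y) = V\bigl(g(X,Y)\bigr) - g([V,X],Y) - g(X,[V,Y]) ,
\]
and then apply metric compatibility to the first term to rewrite $V g(X,Y)$ as $g(\nabla_V X, Y) + g(X, \nabla_V Y)$. Substituting the torsion-free identity $\nabla_V X = [V,X] + \nabla_X V$ (and similarly for $Y$) causes the bracket terms to cancel, leaving
\[
(\mathcal{L}_V g)(X,Y) = g(\nabla_X V, Y) + g(\nabla_Y V, X).
\]

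Next I would compute the right-hand side. By definition $\delta^{*}$ is the symmetrisation of $\nabla$ acting on $1$-forms, so
\[
(\delta^{*} V^{\flat})(X,Y) = \tfrac{1}{2}\bigl((\nabla_X V^{\flat})(Y) + (\nabla_Y V^{\flat})(X)\bigr).
\]
Using metric compatibility once more, $(\nabla_X V^{\flat})(Y) = X(g(V,Y)) - g(V,\nabla_X Y) = g(\nabla_X V, Y)$, with the analogous identity after swapping $X$ and $Y$. Doubling and comparing with the expression for $\mathcal{L}_V g$ gives the claim.

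The computation is entirely mechanical, so the only real subtlety is bookkeeping of signs and the musical isomorphism: one must be careful that the chosen normalisation of $\delta^{*}$ as the symmetric part of $\nabla$ (rather than, say, twice that) is exactly what produces the factor of $2$ on the right, and that $V^{\flat}$ is the $1$-form $g(V,\cdot)$ rather than its negative. Once the conventions are pinned down, the identity drops out in two short lines and no further machinery is needed.
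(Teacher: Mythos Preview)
Your argument is correct and is the standard direct computation. Note that the paper does not actually supply a proof of this proposition: it merely states the identity and cites \cite[Lemma~1.60]{besse87}, so there is no ``paper's proof'' to compare against beyond the reference. Your write-up is exactly the kind of verification one would expect, and the bookkeeping remarks about the normalisation of $\delta^{*}$ and the sign of $V^{\flat}$ match the conventions fixed in the paragraph preceding the proposition.
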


The second Bianchi identity implies that
\begin{equation}
\label{bianchieq}
(2\delta + d\:\tr)Ric = 0
\end{equation}
for any Riemannian metric. The operator $2\delta + d\:\tr$ is sometimes called
the Bianchi operator, and it also satisfies the following useful identity.

\begin{lem}[{\cite[Equation (14)]{kovalev05}}]
\label{delstarlem}
If $(M,g)$ is a Ricci-flat manifold then
\[ (2\delta + d\:\tr)\delta^{*} = \triangle . \]
\end{lem}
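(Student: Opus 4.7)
The plan is to reduce the identity to the standard Bochner--Weitzenb\"ock formula for $1$-forms,
\[ \triangle\alpha = \nabla^{*}\nabla\alpha + \mathrm{Ric}(\alpha), \]
which under the Ricci-flat hypothesis collapses to $\triangle = \nabla^{*}\nabla$. It then suffices to show that $(2\delta + d\,\tr)\delta^{*}$ also equals $\nabla^{*}\nabla$ when $\mathrm{Ric}=0$, and in fact the computation will show that the two operators already agree as $\nabla^{*}\nabla + \mathrm{Ric}$ before imposing Ricci-flatness.

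I would carry out the comparison by a direct local calculation. For $\alpha \in \Omega^{1}(M)$ write $(\delta^{*}\alpha)_{ij} = \half(\nabla_{i}\alpha_{j} + \nabla_{j}\alpha_{i})$, so that $\tr\delta^{*}\alpha = \nabla^{i}\alpha_{i}$ and $(d\,\tr\delta^{*}\alpha)_{j} = \nabla_{j}\nabla^{i}\alpha_{i}$. Since $\delta$ on symmetric $2$-tensors is minus the divergence in the first index,
\[ \bigl((2\delta + d\,\tr)\delta^{*}\alpha\bigr)_{j} = -\nabla^{i}\nabla_{i}\alpha_{j} - \nabla^{i}\nabla_{j}\alpha_{i} + \nabla_{j}\nabla^{i}\alpha_{i} = \nabla^{*}\nabla\alpha_{j} + [\nabla_{j},\nabla^{i}]\alpha_{i}. \]
The Ricci identity, i.e.\ commuting two covariant derivatives on a $1$-form and contracting, identifies $[\nabla_{j},\nabla^{i}]\alpha_{i}$ with $\mathrm{Ric}_{j}{}^{l}\alpha_{l}$. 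This gives the unconditional identity $(2\delta+d\,\tr)\delta^{*} = \nabla^{*}\nabla + \mathrm{Ric}$, which by Bochner--Weitzenb\"ock is exactly $\triangle$ on a manifold where $\mathrm{Ric}=0$.

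The only real obstacle is bookkeeping of sign and trace conventions: one must check that the definition of $\delta^{*}$ adopted here (consistent with Proposition~\ref{delstarprop}, i.e.\ $\mathcal{L}_{V}g = 2\delta^{*}V^{\flat}$), the sign $\delta = -\mathrm{div}$ on symmetric tensors, and the curvature sign implicit in the Ricci identity all align with the Bochner--Weitzenb\"ock formula as stated. Once these are fixed there is no further analytic content: the lemma is a two-line commutator computation packaged with a classical Weitzenb\"ock identity.
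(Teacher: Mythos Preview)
Your approach is essentially the same as the paper's: both compute $(2\delta+d\,\tr)\delta^{*}$ in terms of $\nabla^{*}\nabla$ and then invoke the Weitzenb\"ock formula. The paper packages the computation slightly more invariantly, writing $\delta^{*}=\nabla-\half d$ and $\tr\delta^{*}=d^{*}$ to obtain $(2\delta+d\,\tr)\delta^{*}=2\nabla^{*}\nabla-d^{*}d-dd^{*}=2\nabla^{*}\nabla-\triangle$, which equals $\triangle$ exactly when $\nabla^{*}\nabla=\triangle$, i.e.\ when $\mathrm{Ric}=0$; your index computation arrives at the same intermediate expression $\nabla^{*}\nabla+[\nabla_{j},\nabla^{i}]\alpha_{i}$.

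One correction worth noting: your unconditional claim $(2\delta+d\,\tr)\delta^{*}=\nabla^{*}\nabla+\mathrm{Ric}$ has the wrong sign on the curvature term. With the convention in which Bochner--Weitzenb\"ock reads $\triangle=\nabla^{*}\nabla+\mathrm{Ric}$, the commutator actually gives $[\nabla_{j},\nabla^{i}]\alpha_{i}=-\mathrm{Ric}_{j}{}^{l}\alpha_{l}$, so that $(2\delta+d\,\tr)\delta^{*}=\nabla^{*}\nabla-\mathrm{Ric}$; equivalently $(2\delta+d\,\tr)\delta^{*}+\triangle=2\nabla^{*}\nabla$, which is the paper's formula. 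The identity in the lemma therefore genuinely requires $\mathrm{Ric}=0$ and is not unconditional. You anticipated precisely this sort of sign bookkeeping in your final paragraph, and since the lemma is stated under the Ricci-flat hypothesis your argument goes through unchanged.
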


\begin{proof}
The anti-symmetric part of $\nabla$ on $\Omega^{1}(M)$ is $\half d$, so
$\delta^{*} = \nabla - \half d$.
Also $\tr \; \delta^{*} = d^{*}$ on~$\Omega^{1}(M)$.
Using the Weitzenböck formula $\triangle = \nabla^{*}\nabla - Ric$ we obtain
\[ (2\delta + d\:\tr)\delta^{*} =
2\nabla^{*}\nabla - \nabla^{*}d +d\:\tr\:\delta^{*} =
2\nabla^{*}\nabla - d^{*}d - dd^{*} = \triangle . \qedhere\]
\end{proof}

\begin{prop}
\label{killingprop}
Let $(M,g)$ be a Ricci-flat manifold. If $V$ is a Killing field then
the $1$-form $V^{\flat}$ is harmonic.
If $M$ is compact then the converse also holds.
\end{prop}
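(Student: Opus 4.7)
The strategy is to apply Lemma \ref{delstarlem} in both directions, together with Proposition \ref{delstarprop} which identifies Killing fields with $1$-forms in $\ker \delta^{*}$.

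For the forward implication, assume $V$ is Killing. By Proposition \ref{delstarprop} this means $\delta^{*}V^{\flat} = 0$, and then Lemma \ref{delstarlem} gives
\[ \triangle V^{\flat} = (2\delta + d\:\tr)\delta^{*}V^{\flat} = 0 , \]
so $V^{\flat}$ is harmonic. Note that no compactness is used here.

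For the converse, assume $M$ is compact and $V^{\flat}$ is harmonic. Then $dV^{\flat} = 0$ and $d^{*}V^{\flat} = 0$. Applying Lemma \ref{delstarlem} we find
\[ 0 = \triangle V^{\flat} = 2\delta \delta^{*} V^{\flat} + d\:\tr\:\delta^{*}V^{\flat} . \]
The identity $\tr\:\delta^{*} = d^{*}$ on $\Omega^{1}(M)$ (noted in the proof of Lemma~\ref{delstarlem}) shows that the second term equals $d\,d^{*}V^{\flat} = 0$, so $\delta\delta^{*}V^{\flat} = 0$. Pairing with $V^{\flat}$ and integrating by parts on the compact manifold $M$ gives
\[ 0 = \linner{V^{\flat},\, \delta\delta^{*}V^{\flat}} = \linner{\delta^{*}V^{\flat},\, \delta^{*}V^{\flat}} , \]
hence $\delta^{*}V^{\flat} = 0$ and $V$ is Killing by Proposition \ref{delstarprop}.

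Neither direction involves a genuine obstacle once Lemma \ref{delstarlem} is in hand; the only subtlety is keeping track of where compactness is actually needed. Compactness enters the converse in two places: once to split the harmonic condition into $dV^{\flat}=0$ and $d^{*}V^{\flat}=0$ so that the $d\:\tr$ term drops out, and once to justify the integration by parts that concludes $\delta^{*}V^{\flat}=0$.
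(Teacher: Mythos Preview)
Your proof is correct. The forward implication is exactly the paper's argument. For the converse you take a slightly different route: you use Lemma~\ref{delstarlem} again to write $\triangle V^{\flat} = 2\delta\delta^{*}V^{\flat} + d\,d^{*}V^{\flat}$, kill the second term using $d^{*}V^{\flat}=0$, and then integrate by parts on $\delta\delta^{*}$. The paper instead invokes the Weitzenb\"ock formula directly: on a Ricci-flat manifold $\triangle = \nabla^{*}\nabla$ on $1$-forms, so $\triangle V^{\flat}=0$ and integration by parts give $\nabla V^{\flat}=0$, which trivially implies $\delta^{*}V^{\flat}=0$. The paper's route thus yields the stronger intermediate statement that bounded harmonic $1$-forms on a compact Ricci-flat manifold are actually parallel (a fact used elsewhere in the paper, e.g.\ in \S\ref{ricdefsub}); your route stays entirely at the level of the operators $\delta,\delta^{*},d,d^{*}$ and never needs to mention~$\nabla$ explicitly.
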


\begin{proof}
$\delta^{*}V^{\flat} = 0 \Rightarrow \triangle V^{\flat} = 0$ by
lemma \ref{delstarlem}.
Trivially $\nabla V^{\flat} = 0 \Rightarrow \delta^{*}V^{\flat} = 0$,
and if $M$ is compact then
$\triangle V^{\flat} = \nabla^{*}\nabla V^{\flat} = 0
\Rightarrow \nabla V^{\flat} = 0$
by integration by parts.
\end{proof}

This implies that, for any of the Ricci-flat holonomy groups $G$,
the space of infinitesimal automorphisms of a compact
\gmfd{} is $(\harm^{1})^{\sharp}$.

\subsection{Deformations of Ricci-flat metrics}
\label{ricdefsub}

We summarise some deformation theory for Ricci-flat metrics. This is
essentially taken from the explanation of the deformation theory for Einstein
metrics in \cite[\S 12C]{besse87} (in turn based on Koiso \cite{koiso83}),
specialised to the Ricci-flat case.
The main difference in this presentation is a slightly simplified technique
in the `slice argument'.

Let $M^{n}$ be a compact manifold. The diffeomorphism group $\mathcal{D}$ acts
on the space of Ricci-flat metrics on $M$ by pull-backs.
We define the moduli space $\defmetz$ of Ricci-flat metrics to be the quotient
of the space of Ricci-flat metrics by $\mathcal{D}$. (We do not divide by the
rescaling action of $\bbrp$ too, as is done in \cite{besse87}.)

Take $k \geq 2$, and let $g$ be a Ricci-flat Riemannian metric on $M$.
In order to study a neighbourhood of $g\mathcal{D}$ in $\defmetz$
we use the usual technique of considering a transverse slice for the
diffeomorphism action. Such a slice argument is explained very carefully in
Ebin~\cite{ebin70}. In the current setting it is, however, possible to use
elliptic regularity to avoid some of the technical subtleties of Ebin's
argument. As in \cite[\S 6.7]{jn1}, where a similar simplification is used, one
advantage compared with Ebin's approach is that it is easier to extend to the
asymptotically cylindrical case.

We include the space of smooth Riemannian metrics in the Hölder space
$\holda{k}(S^{2}T^{*}M)$, and let $\kpcal{D}$ be the $\holda{k+1}$ completion
of $\mathcal{D}$ ($\kpcal{D}$ is generated by $\exp$ of $\holda{k+1}$
vector fields).
By proposition \ref{delstarprop} the tangent space to the $\kpcal{D}$-orbit
at $g$ is $\delta^{*}_{g}\holda{k+1}(\Lambda^{1})$. 
Let $\slt$ be the kernel of $2\delta_{g} + d\:\tr_{g}$ in
$\holda{k}(S^{2}T^{*}M)$.
Because $g$ is Ricci-flat, harmonic $1$-forms are parallel and therefore
\ltwoorth{} to the image of $2\delta_{g} + d\:\tr_{g}$. It follows from
lemma \ref{delstarlem} and the
Fredholm alternative for $\triangle_{g}$ on $\Omega^{1}(M)$ that
there is a direct sum decomposition
\begin{equation*}
\holda{k}(S^{2}T^{*}M) = \delta^{*}_{g}\holda{k+1}(\Lambda^{1}) \oplus \slt .
\end{equation*}
We use a neighbourhood $\mathcal{S}$ of $g$ in $\slt$ as a slice for the
$\mathcal{D}$-action.

\begin{rmk}
This is not exactly the same choice of slice as in \cite{besse87}.
It has been used before by Biquard \cite{biquard00}
and Kovalev \cite{kovalev05}.
\end{rmk}

Let $\qcalz$ be the space of Ricci-flat (not \textit{a priori} smooth)
metrics in $\mathcal{S}$ -- this
is the \emph{pre-moduli space of Ricci-flat metrics} near $g$.
The linearisation of the Ricci curvature functional at a Ricci-flat metric
is given by (cf. \cite[Equation $(12.28')$]{besse87})
\begin{equation}
\label{infriceq}
(DRic)_{g}h = \triangle_{L} h + \delta^{*}_{g}(2\delta_{g} + d\:\tr_{g})h ,
\end{equation} 
where $\triangle_{L}$ denotes the Lichnerowicz Laplacian on $S^{2}T^{*}M$
in the sense of definition \ref{lichdef}.
In particular, on the tangent space $\slt$ to the slice
the linearisation reduces to $\triangle_{L}$.
This is elliptic so its kernel has finite dimension. Moreover, the kernel
of $\triangle_{L}$ is contained in~$K$:
differentiating the Bianchi identity (\ref{bianchieq})
at the Ricci-flat metric $g$ gives
\[ (2\delta_{g} + d\:\tr_{g})(DRic)_{g} = 0 , \]
and hence
\[ \triangle_{L}h = 0 \; \Rightarrow \;
\triangle(2\delta_{g} + d\:\tr_{g})h = 0
\; \Rightarrow \; (2\delta_{g} + d\:\tr_{g})h = 0 . \]

\begin{defn}
The space of \emph{infinitesimal Ricci-flat deformations} of $g$ is the
kernel $\varepsilon(g)$ of $\triangle_{L}$ in
$\Gamma(S^{2}(T^{*}M))$.
\end{defn}

If $h \in \Gamma(S^{2}T^{*}M)$ is tangent to a curve of Ricci-flat metrics
in the slice $\mathcal{S}$ then of course $h \in \varepsilon(g)$.
The converse is not true; in general there may be elements in $\varepsilon(g)$
which are not tangent to any curve of Ricci-flat metrics.
Thus $\qcalz$ need not be a manifold with tangent space $\varepsilon(g)$.

The image of $DRic_{g}$ is the \ltwoorth{} complement $K'$ to $\varepsilon(g)$
in $K$. Let $P_{g}$ be the \ltwoorth{} projection to $K'$.
The Ricci curvature functional is real analytic. We can apply the implicit
function theorem to the composition
\begin{equation}
\label{ricprojeq}
F : \cals \to K' : \;\: h \mapsto  P_{g}Ric(h)
\end{equation}
to deduce that there is a real analytic submanifold $Z \subseteq \mathcal{S}$
whose tangent space at $g$ is precisely $\varepsilon(g)$ and which contains
$\qcalz$ as a real analytic subset. The
analyticity implies that \emph{if} every element of $\varepsilon(g)$ is tangent
to a curve of Ricci-flat metrics then in fact $\qcalz$
contains a neighbourhood of $g$ in~$Z$. Thus the pre-moduli space
$\qcalz$ is a manifold in this case (cf. \cite[Corollary $3.5$]{koiso83}).

Note that since $K$ is invariant under the isometry group $\cali_{g}$ of $g$ we
may take $\cals$, $Z$ and $\calq$ to be invariant too. An analogue of
proposition \ref{autinvprop} holds.

\begin{prop}
\label{isominvprop}
For any $g' \in \calq$ sufficiently close to $g$,
$\cali_{g'} \subseteq \cali_{g}$. Moreover, the identity components of
$\cali_{g'}$ and $\cali_{g}$ are equal.
\end{prop}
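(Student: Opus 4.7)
The plan is to mirror the argument used for Proposition \ref{autinvprop}: first I would use Ebin's slice theorem to obtain the inclusion $\cali_{g'} \subseteq \cali_g$, and then I would use Proposition \ref{killingprop} to upgrade this to an equality of identity components by a dimension count.

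For the inclusion I would observe that $\slt = \ker(2\delta_g + d\:\tr_g)$ is defined purely in terms of $g$, hence is $\cali_g$-invariant, so the slice $\cals$ and the pre-moduli space $\calq \subseteq \cals$ may be chosen $\cali_g$-invariant, as noted in the paragraph preceding the statement. Ebin's slice theorem \cite[Theorem $7.1(2)$]{ebin70} then gives $\cali_{g'} \subseteq \cali_g$ for all $g' \in \calq$ sufficiently close to $g$, in direct analogy with the argument for Proposition \ref{autinvprop}.

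For the equality of identity components I would combine Proposition \ref{killingprop} with Hodge theory on the compact manifold $M$: for any compact Ricci-flat metric $h$, the musical isomorphism $V \mapsto V^{\flat}$ identifies the Lie algebra of $\cali_h$ with the space $\harm^{1}_h$ of harmonic $1$-forms of $h$, so $\dim \cali_h = \dim \harm^{1}_h = b^{1}(M)$. Applied to both $g$ and $g'$ this yields $\dim \cali_{g'} = \dim \cali_g = b^{1}(M)$. The inclusion $\cali^{0}_{g'} \subseteq \cali^{0}_g$ inherited from $\cali_{g'} \subseteq \cali_g$ is then an inclusion of connected compact Lie groups of equal dimension, and hence an equality.

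The only real obstacle I foresee is verifying that Ebin's slice argument still applies to the particular slice $\slt$ used here rather than the one appearing in \cite{ebin70}. However, as remarked in the discussion preceding the statement, essentially the same argument has already been employed by Biquard and Kovalev in closely related settings, so no new ideas should be required and the step reduces to checking that the key ingredients (invariance of $\slt$ under $\cali_g$, and transversality of the slice to the $\cald$-orbit at $g$) both go through.
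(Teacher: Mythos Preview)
Your proposal is correct and follows essentially the same argument as the paper: the inclusion $\cali_{g'} \subseteq \cali_g$ is obtained directly from \cite[Theorem~$7.1(2)$]{ebin70}, and the equality of identity components is deduced from the dimension count $\dim \cali_h = b^1(M)$ for compact Ricci-flat $h$ via Proposition~\ref{killingprop}. The paper does not spell out the slice-invariance verification you flag as a possible obstacle, simply citing Ebin's result, so your level of detail is if anything more careful than the original.
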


\begin{proof}
The inclusion $\cali_{g'} \subseteq \cali_{g}$ follows from
\cite[Theorem $7.1(2)$]{ebin70}.
Proposition \ref{killingprop} implies that the dimension of the isometry groups
of Ricci-flat metrics is $b^{1}(M)$, so if $\cali_{g'} \subseteq \cali_{g}$
then the identity components must be equal.
\end{proof}

The elements of $Z$ are smooth by elliptic regularity (since the linear part of
the equation $F(h) = 0$ defining $Z$ is $\triangle_{L}h = 0$), and
when $\qcalz = Z$ it is relatively straight-forward to deduce from the
submersion theorem that $\qcalz \to \defmetz$ is open.
In general one needs to do a little bit of extra work.

\begin{thm}
\label{koisothm}
Let $M$ be a compact manifold and $g$ a Ricci-flat metric on $M$.
Let $\qcalz$ be the pre-moduli space of Ricci-flat metrics near $g$,
and $\cali_{g}$ the stabiliser of $g$ in $\cald$.
Then $\qcalz/\cali_{g}$ is homeomorphic to
a neighbourhood of $g\cald$ in $\defmetz$.
In particular, if every element of $\varepsilon(g)$ is integrable then
$\defmetz$ is an orbifold near $g\cald$.
\end{thm}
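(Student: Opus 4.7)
The plan is a standard slice argument: show that $\cals$ is a local transversal to the $\kpcal{D}$-action on metrics, so that $\qcalz \subseteq \cals$ parametrises $\defmetz$ near $g\cald$ up to the residual $\cali_g$-symmetry, and then read off the orbifold conclusion.

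First I would consider the action map
\begin{equation*}
\Phi : \cals \times \kpcal{D} \to \holda{k}(S^{2}T^{*}M), \qquad (h, \phi) \mapsto (\phi^{-1})^{*} h.
\end{equation*}
Its linearisation at $(g, \mathrm{id})$ is $(k, V) \mapsto k - 2\delta^{*}_{g} V^{\flat}$ by Proposition \ref{delstarprop}. The direct sum decomposition $\holda{k}(S^{2}T^{*}M) = \slt \oplus \delta^{*}_{g}\holda{k+1}(\Lambda^{1})$ recalled in \S\ref{ricdefsub} makes this surjective, while Proposition \ref{killingprop} identifies the kernel with $\{0\} \times \{V : V \textrm{ is a Killing field of } g\}$, i.e.\ with the tangent space to $\{g\} \times (\cali_{g})_{0}$. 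The implicit function theorem then gives a local homeomorphism from a neighbourhood of $(g, [\mathrm{id}])$ in $\cals \times \kpcal{D}/\cali_{g}$ onto a neighbourhood of $g$ in $\holda{k}(S^{2}T^{*}M)$.

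Next I would read off the homeomorphism statement. For surjectivity of $\qcalz/\cali_{g}$ onto a neighbourhood of $g\cald$ in $\defmetz$: any smooth Ricci-flat $g'$ close to $g$ factors as $g' = (\phi^{-1})^{*}h$ with $h \in \cals$, and diffeomorphism invariance of $Ric$ gives $h \in \qcalz$; elliptic regularity applied to \eqref{ricprojeq}, whose linear part is $\triangle_{L}$, upgrades $h$ to smooth, and bootstrapping $g' = (\phi^{-1})^{*}h$ then makes $\phi$ smooth so $\phi \in \cald$. For injectivity modulo $\cali_{g}$: if $h_{1}, h_{2} \in \qcalz$ and $\phi \in \cald$ satisfy $h_{2} = \phi^{*}h_{1}$ with everything sufficiently close to $g$ and $\mathrm{id}$, then $\Phi(h_{2}, [\mathrm{id}]) = \Phi(h_{1}, [\phi^{-1}])$, and local injectivity of $\Phi$ modulo $\cali_{g}$ forces $h_{1} = h_{2}$ and $\phi \in \cali_{g}$. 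Continuity of the resulting bijection in both directions is immediate from the slice description.

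Finally, if every element of $\varepsilon(g)$ is integrable, then $\qcalz$ coincides with the smooth submanifold $Z \subseteq \cals$ described in \S\ref{ricdefsub}. Proposition \ref{isominvprop} shows that $(\cali_{g})_{0} \subseteq \cali_{g'}$ for every $g' \in \qcalz$ near $g$, so the $\cali_{g}$-action on $\qcalz$ factors through the quotient $\cali_{g}/(\cali_{g})_{0}$; this is finite because $\cali_{g}$ is closed inside the compact isometry group of $(M,g)$, so $\qcalz/\cali_{g}$ is an orbifold chart for $\defmetz$ near $g\cald$. The main obstacle is the regularity bootstrap: one must first upgrade $h$ from $\holda{k}$ to smooth using ellipticity of the equation cutting out $\qcalz$ in $\cals$, and then upgrade the $\holda{k+1}$-diffeomorphism $\phi$ to a smooth element of $\cald$ from $g' = (\phi^{-1})^{*}h$, to ensure that the slice argument carried out in H\"older spaces delivers conclusions about smooth metrics modulo smooth diffeomorphisms.
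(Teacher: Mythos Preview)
Your overall plan---slice, then read off the homeomorphism---is the right shape, but the first step has a genuine gap. The action map $\Phi(h,\phi) = (\phi^{-1})^{*}h$ from $\cals \times \kpcal{D}$ to $\holda{k}(S^{2}T^{*}M)$ is \emph{not} $C^{1}$: the partial derivative in the $\phi$-direction at a point $(h,\phi)$ involves a Lie derivative of $h$, which costs one derivative, so for a generic $h \in \holda{k}$ it lands only in $\holda{k-1}$. The linearisation you wrote down is correct at the single point $(g,\mathrm{id})$ because $g$ happens to be smooth, but the implicit function theorem needs $C^{1}$ in a neighbourhood, and that fails. This loss-of-derivatives problem is exactly the ``technical subtlety of Ebin's argument'' the paper alludes to before the theorem; Ebin resolves it with additional structure, but you have not. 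The ``main obstacle'' you flag at the end (bootstrapping regularity \emph{after} the slice step) is therefore misplaced: the obstacle is already in the slice step itself.

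The paper circumvents this by replacing the infinite-dimensional slice $\cals$ with the finite-dimensional real-analytic submanifold $Z \subset \cals$ that contains $\qcalz$ and has $T_{g}Z = \varepsilon(g)$. Elliptic regularity for the defining equation of $Z$ makes its elements smooth, so the map $(k,\phi,g') \mapsto k + \phi^{*}g'$ on $K' \times \kpcal{D} \times Z$ \emph{is} smooth and the inverse function theorem applies to it. The paper then builds a $\kpcal{D}$-compatible extension of $F$ whose zero set contains both the Ricci-flat metrics and $Z\kpcal{D}$, and applies the submersion theorem to $Z \times \kpcal{D} \to F^{-1}(0)$ rather than to $\cals \times \kpcal{D} \to \holda{k}(S^{2}T^{*}M)$. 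Smoothness of the diffeomorphism at the end is obtained from Myers--Steenrod (isometries between smooth metrics are smooth) rather than from bootstrapping $\phi$ directly. A secondary point: in your injectivity argument you assume $\phi$ is close to $\mathrm{id}$, but that is part of what must be shown; one needs the properness input behind Proposition~\ref{isominvprop} (Ebin's Theorem~7.1) to force an arbitrary $\phi \in \cald$ relating two slice elements near $g$ into $\cali_{g}$.
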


\begin{proof}
We wish to extend (\ref{ricprojeq}) to a function on a neighbourhood
$U$ of $g$ in $\holda{k}(S^{2}T^{*}M)$ such that $F^{-1}(0)$ is a manifold
containing the Ricci-flat metrics in $U$ \emph{and} ensure that
$Z\kpcal{D} \cap U \subseteq F^{-1}(0)$.
Then we apply the submersion theorem to deduce that $Z$ contains
representatives for all diffeomorphism classes in $F^{-1}(0)$ close to $g$.

By the inverse function theorem, any element of a small neighbourhood $U$ of
$g$ can be written as $k + \phi^{*}g'$, with $k \in K'$, $\phi \in \kpcal{D}$
and $g' \in Z$. Using proposition \ref{isominvprop},
\begin{equation*}
f : U \to g\kpcal{D}, \;\: k + \phi^{*}g' \mapsto \phi^{*}g
\end{equation*}
is a well-defined smooth function.
If $f(h) = \phi^{*}g$ then $P_{f(h)}$ is a projection to $\phi^{*}K'$,
and we can take
\begin{equation}
F : U \to K', \;\: h \mapsto P_{g}P_{f(h)}Ric(h) .
\end{equation}
Then $DF_{g}$ maps $K'$ onto itself, so $F^{-1}(0)$ is a submanifold of $U$
by the implicit function theorem. By construction it contains both the
Ricci-flat metrics in $U$ and $Z\kpcal{D} \cap U$. Now
\begin{equation}
\label{zsubeq}
Z \times \kpcal{D} \to F^{-1}(0)
\end{equation}
is an open map near $(g, id)$ by the submersion theorem (it is smooth because
elements of $Z$ are). This implies that
any smooth Ricci-flat metric $g'$ near $g$ is $\kpcal{D}$-equivalent to an
element of $Z$, which must in fact lie in $\qcalz$ because Ricci-flatness is a
diffeomorphism-invariant property. Since isometries between smooth Riemannian
metrics are smooth (see Myers and Steenrod \cite[Theorem~8]{myers39}),
$g'$ is in fact $\cald$-equivalent to an element of $\qcalz$. 
In other words, $\qcalz \to \defmetz$ is open.

Proposition \ref{isominvprop} implies that in fact $\qcalz \to \defmetz$ is
injective up to the action of the stabiliser $\cali_{g}$ and, since $\cali_{g}$
is compact, that the action on $\qcalz$ factors through a finite group
(cf. \cite[$12.25$]{besse87}).
\end{proof}

\begin{rmk}
\label{orbirmk}
Clearly the argument would give the same result even if we were to consider the
moduli space of Ricci-flat metrics given by dividing by the action of
the full diffeomorphism group of $M$.
\end{rmk}

\begin{rmk}
In \cite[Lemma $2.6$]{koiso83} Koiso uses instead of $\mathcal{S}$ a slice
constructed by Ebin \cite{ebin70}, and shows that any
Einstein metric in this slice is smooth.
\end{rmk}

\subsection{Proof of theorem \cptopenthm}
\label{proof1sub}

Let $G$ be one of the Ricci-flat holonomy groups,
$M$ a compact \gmfd, $\Gamma(\gspace)$ the space of \gstr s on $M$ and
\begin{equation}
\label{mbundleeq}
m : \Gamma(\gspace) \to \Gamma(S^{2}T^{*}M), \;\: \chi \mapsto g_{\chi}
\end{equation}
the natural map that sends a \gstr{} to the metric it defines.
In order to prove theorem \cptopenthm{} we show first that for any
torsion-free
\gstr{} $\chi$ the derivative of $m$ maps the tangent space to the
pre-moduli space $\qcalg$ at $\chi$ onto the space $\varepsilon(g_{\chi})$
of infinitesimal Ricci-flat deformations.

The tangent space to $\Gamma(\gspace)$ at $\chi$ is the space of differential
forms $\Gamma(E_{\chi})$, where $E_{\chi} \subseteq \Lambda^{*}T^{*}M$
is a vector subbundle associated to the \gstr{} defined by $\chi$.
Fibre-wise $\gspace$ is a $GL(\bbr^{n})$-orbit and $E_{\chi}$ is the tangent
space $\glalg_{n}\chi$ to the orbit. Because $m$ is $GL(\bbr^{n})$-equivariant
its derivative takes $a\chi \mapsto ag_{\chi}$ for any $a \in \glalg_{n}$,
which maps onto the fibre of $S^{2}T^{*}M$.
Hence the derivative
\begin{equation}
\label{mdereq}
Dm_{\chi} : \Gamma(E_{\chi}) \to \Gamma(S^{2}T^{*}M)
\end{equation}
is surjective. Furthermore, the derivative
is $G$-equivariant with respect to the \gstr{} defined by $\chi$.
Since $\triangle_{L}$ is the Lichnerowicz Laplacian
on $S^{2}T^{*}M$,
lemma \ref{lichlem} implies that the diagram below commutes.

\begin{diagram}[PostScript=Rikicki,balance]
\Gamma(E_{\chi})   &    \rTo^{Dm_{\chi}}  &  \Gamma(S^{2}T^{*}M) \\
\dTo^\triangle\;\; &  & \dTo_{\triangle_{L}} \\
\Gamma(E_{\chi})   &    \rTo^{Dm_{\chi}}  &  \Gamma(S^{2}T^{*}M) 
\end{diagram}

\noindent Hence

\begin{lem}
\label{mdersurjlem}
If $\chi$ is a torsion-free \gstr{} then $Dm_{\chi}$ maps the harmonic sections
of $E_{\chi}$ onto the space $\varepsilon(g_{\chi})$ of infinitesimal
Ricci-flat deformations.
\end{lem}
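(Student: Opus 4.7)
The plan is to combine three ingredients already established just above the lemma: the point-wise surjectivity of $Dm_\chi$, the commuting square relating $\triangle$ on $\Gamma(E_\chi)$ with $\triangle_L$ on $\Gamma(S^2T^*M)$, and Hodge theory on the compact manifold $M$ applied to both of these self-adjoint elliptic operators.

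First I would observe that the commuting diagram immediately gives the inclusion $Dm_\chi(\ker\triangle) \subseteq \ker\triangle_L = \varepsilon(g_\chi)$, so only surjectivity onto $\varepsilon(g_\chi)$ needs work. Note that the diagram also records the fact (a consequence of lemma \ref{lichlem}) that $\triangle$ preserves the $G$-invariant subbundle $E_\chi$, so that Hodge theory applies cleanly on $\Gamma(E_\chi)$: since $\triangle$ is formally self-adjoint and elliptic on the compact $M$, we have an $L^2$-orthogonal decomposition
\[ \Gamma(E_\chi) = \ker\triangle \oplus \im\triangle . \]
Similarly $\triangle_L$ on $\Gamma(S^2T^*M)$ gives $\Gamma(S^2T^*M) = \varepsilon(g_\chi) \oplus \im\triangle_L$.

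Now given $h \in \varepsilon(g_\chi)$, the point-wise surjectivity of $Dm_\chi$ (which was verified using the $GL(\bbr^n)$-equivariance of $m$) produces some $\alpha \in \Gamma(E_\chi)$ with $Dm_\chi(\alpha) = h$. Decompose $\alpha = \alpha_0 + \triangle\beta$ with $\alpha_0 \in \ker\triangle$ and $\beta \in \Gamma(E_\chi)$. Applying $Dm_\chi$ and using the commuting diagram,
\[ h = Dm_\chi(\alpha_0) + \triangle_L(Dm_\chi\beta) . \]
The first term lies in $\ker\triangle_L = \varepsilon(g_\chi)$ (by the inclusion noted above), the second in $\im\triangle_L$, and $h$ itself lies in $\varepsilon(g_\chi)$; by the orthogonality of the Hodge decomposition for $\triangle_L$ the two pieces must separately match, giving $h = Dm_\chi(\alpha_0)$ with $\alpha_0$ harmonic.

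There is no real obstacle here: the only subtle point is to confirm that $\triangle$ genuinely preserves $E_\chi$ so that the Hodge decomposition can be taken inside $\Gamma(E_\chi)$ rather than in the ambient $\Omega^*(M)$, and this is exactly the content of lemma \ref{lichlem} applied to the $G$-equivariant inclusion $E_\chi \hookrightarrow \Lambda^*T^*M$ (together with lemma \ref{lichinstlem} identifying the Hodge and Lichnerowicz Laplacians on forms).
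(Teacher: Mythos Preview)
Your argument is correct and fills in what the paper leaves as a bare ``Hence'' after the commuting diagram; the paper offers no further proof, so your Hodge-decomposition argument is one legitimate way to make the step rigorous.

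A slightly slicker alternative, more in the spirit of the paper's repeated appeal to lemma~\ref{lichlem}, avoids Hodge theory entirely. Since $Dm_\chi$ is a surjective $G$-equivariant bundle map and $G$ is compact, the fibrewise orthogonal complement of $\ker Dm_\chi$ furnishes a $G$-equivariant right inverse $s : S^2T^*M \to E_\chi$. Lemma~\ref{lichlem} applied to $s$ gives $\triangle \circ s = s \circ \triangle_L$, so $s$ carries $\varepsilon(g_\chi) = \ker\triangle_L$ into the harmonic sections of $E_\chi$, and surjectivity is immediate. The payoff is that this version is purely representation-theoretic and makes no use of the compactness of $M$, so it transfers verbatim to the EAC setting of \S\ref{eacopensub}, whereas your Hodge-decomposition route would have to be redone there with bounded harmonic forms and the weighted Fredholm theory.
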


So let $\chi$ be any torsion-free \gstr{} on $M$ and $\qcalg$ the pre-moduli
space of torsion-free \gstr s near $\chi$. As described in subsection
\ref{ricdefsub}, there is a slice at $g_{\chi}$ for the $\mathcal{D}$-action on
the metrics, the Ricci-flat metrics in the slice are a real analytic subset
of a submanifold $Z$, and the tangent space
to $Z$ at $\chi$ is $\varepsilon(g_{\chi})$.
Let $P : F^{-1}(0) \to Z$ be the composition of
a smooth local right inverse to the submersion (\ref{zsubeq}) with the
projection to the first factor. $F^{-1}(0)$ contains the Ricci-flat metrics
near $g_{\chi}$, and $P$ can be viewed as a local projection to the slice:
$P(g')$ is $\cald$-equivalent to $g'$ for any Ricci-flat $g'$ close
to~$g_{\chi}$. Then
\begin{equation}
\label{pomeq}
P \circ m : \qcalg \to Z
\end{equation}
is a well-defined smooth map and lemma \ref{mdersurjlem}
means that its derivative at $\chi$ is surjective.
Therefore every element of $\varepsilon(g_{\chi})$ is tangent to
a path of Ricci-flat metrics, so $\qcalz$ is a manifold.
By the submersion theorem, $\defmetg$ (the image of $\defstrg$ in $\defmetz$)
contains a neighbourhood of~$g\cald$.

The pre-images of $g_{\chi}$ under $m$ are defined by differential forms
which are harmonic with respect to $g_{\chi}$. By Hodge theory they represent
distinct cohomology classes.
Let $\cali_{g_{\chi}} \subseteq \cald$ be the isometries of $g_{\chi}$ isotopic
to the identity.
Because $\cali_{g_{\chi}}$ acts trivially on cohomology it must fix
the fibre over of $m$ over $g_{\chi}$, so $\cali_{g_{\chi}} = \cali_{\chi}$.
Now, if $g' \in \qcalz$ then
$g' = \phi^{*}m(\chi')$ for some $\chi' \in \qcalg$ and $\phi \in \kpcal{D}$
because (\ref{pomeq}) is a submersion. As $\cali_{\chi}$
acts trivially on $\qcalg$ by proposition \ref{autinvprop} it follows
that the conjugate $\cali_{g_{\chi}}^{\phi}$ fixes $g'$.
But then
$\cali_{g_{\chi}}^{\phi} \subseteq \cali_{g'} \subseteq \cali_{g_{\chi}}$
by proposition \ref{isominvprop}, so in fact
$\cali_{g_{\chi}}^{\phi} = \cali_{g_{\chi}}$.
Hence $\cali_{g_{\chi}}$ fixes any $g' \in \calq$.

Now theorem \ref{koisothm} implies that $\qcalz$ is homeomorphic to a
neighbourhood of $\defmetz$. Thus $\defmetg$ is a manifold near $g\cald$
and the proof of theorem \cptopenthm{} is complete.

\subsection{The asymptotically cylindrical case}
\label{eacopensub}

The proof of theorem \cptopenthm{} only used the compactness assumption
to access certain deformation results for \gstr s and Ricci-flat metrics.
For the cases $G = G_{2}$ and $Spin(7)$ there are pre-moduli spaces of EAC
\gstr s, with properties analogous to proposition \ref{gpremodprop}.

If $M$ is an EAC \gmfd{}, let $\defstrp$ denote the quotient of the space of
torsion-free EAC \gstr s on $M$ by the group $\cald$ of EAC diffeomorphisms
of $M$ isotopic to the identity.

\begin{prop}
\label{cylgpremodprop}
Let $G = Spin(7)$ or $G_{2}$, $M$ an EAC \gmfd{}
and $\chi$ a torsion-free EAC \gstr{} on $M$.
Then there is a submanifold $\qcalg$ of the space of $C^{1}$ \gstr s such that
\begin{enumerate}
\item
the elements of $\qcalg$ are smooth EAC torsion-free \gstr s,
\item the tangent space to $\qcalg$ at $\chi$ is the space of bounded harmonic
sections of $E_{\chi}$,
\item the natural map $\qcalg \to \defstrp$ is a homeomorphism onto a
neighbourhood of $\chi\pcal{D}$ in~$\defstrp$.
\end{enumerate}
\end{prop}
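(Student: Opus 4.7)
The plan is to run the same slice-plus-implicit-function-theorem argument as in the compact case (Proposition \ref{gpremodprop}), but with Hodge theory replaced everywhere by the Fredholm theory for the Hodge Laplacian on weighted Hölder spaces supplied by Theorem \ref{lapindthm}. Fix a small rate $\delta > 0$ as in that theorem, work in the completions $\holdax{k+1}{\delta}$ of sections of $E_\chi$ and of EAC vector fields, and let $\kpcal{D}$ denote the $\holdax{k+1}{\delta}$-completion of $\cald$.

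First I would set up the slice. The infinitesimal action of an EAC vector field $V$ on the \gstr{} $\chi$ is $\mathcal{L}_V \chi$; since $\chi$ is closed and parallel this has the form $d(V \lrcorner\, \chi)$. Using the Fredholm decomposition for $\triangle$ on $\holdax{k}{\delta}(E_\chi)$ from Theorem \ref{lapindthm}, I split
\begin{equation*}
\holdax{k}{\delta}(E_\chi)
\;=\; d\,\holdax{k+1}{\delta}(\Lambda^\ast T^*M)\;\oplus\;\mathcal{K},
\end{equation*}
where $\mathcal{K}$ is the kernel of $d^\ast$ acting into $\holdax{k-1}{\delta}$, and take a small neighbourhood $\mathcal{S}$ of $\chi$ in $\chi + \mathcal{K}$ as the slice. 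One checks, exactly as in \S \ref{ricdefsub}, that a neighbourhood of $\chi$ in the space of $\holdax{k}{\delta}$-structures is covered by $\kpcal{D}\cdot \mathcal{S}$.

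Second, I would impose the torsion-free equation on $\mathcal{S}$. For $G = Spin(7)$ the condition is $d\psi = 0$; for $G = G_2$ it is $d\varphi = 0$ together with $d^\ast_\varphi \varphi = 0$. Combined with the gauge condition $d^\ast(\chi'-\chi) = 0$ defining $\mathcal{S}$, the linearisation at $\chi$ becomes (up to zeroth-order pieces that vanish because $\chi$ is parallel) the full Hodge Laplacian on $\Gamma(E_\chi)$. By Theorem \ref{lapindthm} this is Fredholm on $\holdax{k+2}{\delta}(E_\chi) \to \holdax{k}{\delta}(E_\chi)$ with index $-(b^{m-1}(X)+b^m(X))$, and the cokernel is canonically identified with the bounded harmonic sections $\harm(E_\chi)$ in the $-\delta$ dual space. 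Applying the implicit function theorem on these Hölder spaces yields a submanifold $\qcalg \subset \mathcal{S}$ of solutions whose tangent space at $\chi$ is $\harm(E_\chi)$; elliptic regularity for the coupled torsion-free/gauge system, together with asymptotic analysis of the translation-invariant limit equation on $X \times \bbr$, shows that every element of $\qcalg$ is a smooth EAC torsion-free \gstr, giving (i) and (ii). Item (iii) then follows from the slice decomposition in the usual way.

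The main obstacle is the unobstructedness step: showing that the implicit function theorem actually produces solutions tangent to every bounded harmonic section, i.e.\ that the quadratic and higher error terms in the torsion-free equation lie in the image of the linearisation. In the compact case Goto handles this by a cohomological argument that all obstructions are exact. In the EAC setting one must work with the pairing between $\delta$-decaying and $-\delta$-decaying weighted spaces and use that the relevant obstructions vanish against bounded harmonic forms; this is exactly the content of the deformation-theoretic results from \cite{jn1} which one invokes to complete the proof.
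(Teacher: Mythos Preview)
The paper does not actually prove this proposition: its proof is a bare citation to \cite[\S 6]{jn1} and \cite[\S 4.3]{jnthesis}. So there is no ``paper's argument'' to compare with beyond the fact that both you and the paper ultimately defer to \cite{jn1} for the hard work.

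That said, your sketch has a genuine gap in the function-space setup. You work entirely in the exponentially \emph{decaying} spaces $\holdax{k}{\delta}(E_\chi)$, and your slice $\mathcal{S}$ lives in $\chi + \mathcal{K}$ with $\mathcal{K} \subset \holdax{k}{\delta}(E_\chi)$. But the statement asks for the tangent space at $\chi$ to be the space of \emph{bounded} harmonic sections of $E_\chi$, and these in general do not decay: they are asymptotic to nonzero translation-invariant harmonic forms on $X \times \bbr$. Deformations of the torsion-free EAC \gstr{} include deformations of the asymptotic limit structure on $X$, and a slice sitting inside a $\delta$-decaying space cannot see them. Concretely, your own index computation shows that the Laplacian on $\holdax{k+2}{\delta}(E_\chi)$ has \emph{negative} index, so its kernel (the $\delta$-decaying harmonic sections) is strictly smaller than the bounded harmonic sections; the implicit function theorem applied as you describe yields a submanifold whose tangent space is only the decaying kernel, not all of $\harm^*_0(E_\chi)$. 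The sentence claiming the tangent space equals $\harm(E_\chi)$ does not follow from what precedes it.

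The fix, which is what \cite{jn1} actually does (and what the paper itself mirrors for Ricci-flat metrics in \S\ref{eacopensub}), is to enlarge the configuration space to $\holdad{k}(E_\chi) + \rho\,(\textrm{pre-moduli space on }X)$, i.e.\ allow a finite-dimensional family of asymptotic limits pulled in by a cutoff $\rho$. One first constructs the pre-moduli space on the compact cross-section $X$, then works in the affine space of sections $\holdad{k}$-asymptotic to elements of that. On this enlarged space the relevant linear operator has index zero and kernel equal to the bounded harmonic sections, and the slice and implicit-function-theorem steps then go through as you outline.
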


\begin{proof}
See \cite[\S 6]{jn1} for the $G_{2}$ case, and \cite[\S 4.3]{jnthesis}
for the $Spin(7)$ case.
\end{proof}

In order to prove the theorem \eacopenthm{}, the EAC version of theorem
\cptopenthm{}, it therefore suffices to explain how to set up the deformation
theory for EAC Ricci-flat metrics.
Below we define the slices with same equations as in the compact case in
\S \ref{ricdefsub} and use the same reasoning as for deformations of EAC
\gtmfd s in \mbox{\cite[\S 6.7]{jn1}} to make the slice arguments work on
EAC manifolds.
The resulting approach is similar to that of Kovalev \cite{kovalev05},
who considers Ricci-flat deformations of EAC Calabi-Yau manifolds.

Let $M^{n}$ be a manifold with cylindrical ends and cross-section $X^{n-1}$.
Let $\defmetp$ be the quotient of the space of EAC Ricci-flat metrics (with
any exponential rate) by the group $\pcal{D}$ of EAC diffeomorphisms of $M$
isotopic to the identity.
We pick an EAC Ricci-flat metric $g$ on $M$ and study a neighbourhood
of $g\pcal{D}$ in $\defmetp$.
By definition, the asymptotic limit of $g$ is a cylindrical metric
$dt^{2} + g_{X}$ on $X \times \bbr$,
where $g_{X}$ is a Ricci-flat metric on~$X$.

We work with weighted Hölder spaces of sections. Let $k \geq 2$,
$\alpha \in (0,1)$, and $\delta > 0$ be less than the exponential rate of $g$.
The metric $g$ defines a Hodge Laplacian on $1$-forms and a
Lichnerowicz Laplacian on symmetric bilinear forms, which are both
asymptotically translation-invariant operators.
We require that $\delta$ is small enough that the Laplacians are Fredholm
on $\holdad{k}$ spaces, as we may according to theorem \ref{lapindthm}.

We proved in \S \ref{ricdefsub} that there is a real analytic submanifold
$Z \subset \holda{k}(S^{2}T^{*}X)$ which contains representatives of all
diffeomorphism classes of Ricci-flat metrics on $X$ close to~$g_{X}$.
Its tangent space $T_{g_{X}}Z = \varepsilon(g_{X})$ is the space of
Lichnerowicz harmonic sections of $S^{2}T^{*}X$.

Let $\scrmkz$ denote the space of $\holda{k}$ metrics on $M$ which are
$\holdad{k}$-asymptotic to cylindrical metrics $dt + g_{X}^{2}$ such that
$g_{X} \in Z$.
If $\rho$ is a cut-off function for the cylinder then $\rho Z$ can be identified
with a space of bilinear forms on $M$, and $\scrmkz$ is an open subset
\begin{equation*}
\scrmkz \subset \holdad{k}(S^{*}T^{*}M) + \rho Z .
\end{equation*}
Similarly let $\caldkpz$ be the set of EAC diffeomorphisms with rate $\delta$
which are asymptotic to elements of the isometry group $\cali_{g_{X}}$ of
$g_{X}$.
Then $\scrmkz$ contains representatives of all diffeomorphism classes of
Ricci-flat metrics near $g$ and, because $Z$ is $\cali_{g_{X}}$-invariant,
proposition \ref{isominvprop} implies that any isometry between elements of
$\scrmkz$ must lie in $\caldkpz$ (a similar argument for simplifying the
problem by a slice at the boundary was used to study the moduli space
of torsion-free EAC \gtstr s in \cite[Lemma 6.24]{jn1}).
We therefore identify a slice in $\scrmkz$ for the action of $\caldkpz$ at $g$.
The tangent space to $\scrmkz$ at $g$ is
\begin{equation*}
T_{g}\scrmkz = \holdad{k}(S^{*}T^{*}M) \oplus \rho \varepsilon(g_{X}) .
\end{equation*}
The tangent space at the identity of $\caldkpz$ corresponds to vector fields
which are $\holdad{k}$-asymptotic to translation-invariant Killing vector
fields on the cylinder, i.e. to elements of~$(\harm^{1}_{\infty})^{\sharp}$,
where $\harm^{1}_{\infty}$ denotes the translation-invariant harmonic
$1$-forms on the cylinder $X \times \bbr$.
By proposition \ref{delstarprop} the tangent space to the $\caldkpz$-orbit
at $g$ is
\begin{equation*}
\delta^{*}_{g}(\holdad{k}(\Lambda^{1}) \oplus \rho \harm^{1}_{\infty}) .
\end{equation*}
Let $\slt$ be the kernel of $2\delta_{g} + d\:\tr_{g}$ in $T_{g}\scrmkz$.

\begin{lem}
Let $M$ be a Ricci-flat EAC manifold with a single end.
Then
\begin{equation}
T_{g}\scrmkz = \slt \oplus
\delta^{*}_{g}(\holdad{k}(\Lambda^{1}) \oplus \rho \harm^{1}_{\infty}) .
\end{equation}
\end{lem}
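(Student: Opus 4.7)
The strategy is to imitate the compact-case argument of \S \ref{ricdefsub}: use Lemma \ref{delstarlem}, which gives $(2\delta_{g} + d\:\tr_{g})\delta^{*}_{g} = \triangle$ on $1$-forms in the Ricci-flat setting, to reduce the direct-sum decomposition to Fredholm theory for the Hodge Laplacian provided by Theorem \ref{lapindthm}. Writing $V := \holdad{k}(\Lambda^{1}) \oplus \rho\harm^{1}_{\infty}$, the plan is to prove separately that (a) $\delta^{*}_{g}|_{V}$ is injective (so the sum is direct), and (b) for every $h \in T_{g}\scrmkz$ there exists $v \in V$ with $\triangle v = (2\delta_{g} + d\:\tr_{g})h$, so that $h - \delta^{*}_{g} v \in \slt$.

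For (a), suppose $\delta^{*}_{g} v \in \slt$ for some $v \in V$; then $\triangle v = 0$ by Lemma \ref{delstarlem}, so $v$ is a bounded EAC harmonic $1$-form with asymptotic limit $v_{\infty} \in \harm^{1}_{\infty}$. Since $M$ has a single end, Corollary \ref{degonehodgecor} combined with the decomposition $\harm^{1}_{0} = \harm^{1}_{abs} \oplus \harm^{1}_{E}$ forces $v \in \harm^{1}_{abs}$, so $v_{\infty} \in \harm^{1}_{X}$ carries no $dt$-component. The compact Ricci-flat cross-section $X$ has all its harmonic $1$-forms parallel by the standard Bochner argument, so $v_{\infty}$ is parallel on the product cylinder and $\nabla v = \nabla(v - v_{\infty})$ decays exponentially on the end. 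Applying the Weitzenb\"ock identity $\triangle = \nabla^{*}\nabla$ and integrating by parts on $M$ (with boundary terms at $t = R$ vanishing as $R \to \infty$) then gives $\int_{M} |\nabla v|^{2} = 0$, so $v$ is parallel on $M$ and in particular $\delta^{*}_{g} v = 0$.

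For (b), write $h = h_{0} + \rho h_{X}$ with $h_{0} \in \holdad{k}(S^{2}T^{*}M)$ and $h_{X} \in \varepsilon(g_{X})$, and set $f = (2\delta_{g} + d\:\tr_{g}) h$. The $h_{0}$-contribution to $f$ is $\holdad{k-1}$-decaying, while the $\rho h_{X}$-contribution is compactly supported: the linearised Bianchi identity used in \S \ref{ricdefsub} gives $\triangle_{L} h_{X} = 0 \Rightarrow (2\delta_{g_{X}} + d\:\tr_{g_{X}}) h_{X} = 0$ on the compact Ricci-flat $X$, so only the commutator of $(2\delta_{g} + d\:\tr_{g})$ with the cut-off $\rho$ survives. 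Thus $f$ lies in a decaying weighted H\"older space. By Theorem \ref{lapindthm}, $\triangle$ on $1$-forms is Fredholm between the relevant $\holdad{\cdot}$-spaces with index $-(b^{0}(X) + b^{1}(X))$, and its decaying kernel is trivial (a decaying parallel $1$-form is zero). Enlarging the source by $\rho\harm^{1}_{\infty}$ contributes $\dim\harm^{1}_{\infty} = b^{0}(X) + b^{1}(X)$ new dimensions to the image, matching the cokernel; solvability of $\triangle v = f$ in $V$ then follows from the orthogonality of Bianchi images against each adjoint obstruction.

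The main obstacle is completing this cokernel-matching: one must identify the cokernel with harmonic $1$-forms of controlled growth and verify that every $f = (2\delta_{g} + d\:\tr_{g}) h$ pairs to zero against each obstruction. On a Ricci-flat EAC manifold with a single end these obstructing harmonics reduce, modulo the translation-invariant piece already absorbed into the $\rho\harm^{1}_{\infty}$ extension, either to decaying harmonics (forced to be parallel, hence zero) or to $dt$-type growth (ruled out by the single-end hypothesis); the directness step (a) is by contrast essentially immediate once bounded harmonic $1$-forms on such $M$ are recognised as parallel.
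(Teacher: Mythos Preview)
Your overall strategy matches the paper's: reduce via Lemma \ref{delstarlem} to showing that the image of $2\delta_g + d\,\tr_g$ on $T_g\scrmkz$ lies in the image of $\triangle$ on $V$, and then analyse the Fredholm picture for $\triangle$. Your directness argument (a) is fine in substance (though ``$\delta^*_g|_V$ injective'' is a misstatement --- parallel $1$-forms lie in the kernel; what you actually prove, and what is needed, is $\slt \cap \delta^*_g(V) = 0$). The paper does not spell (a) out, but implicitly uses the same fact that bounded harmonic $1$-forms on a Ricci-flat EAC manifold are parallel.

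The genuine gap is in (b). You correctly locate the issue --- one must check that $f = (2\delta_g + d\,\tr_g)h$ is $L^2$-orthogonal to the obstruction space $\harm^1_0$ --- but then your last paragraph tries to argue that the obstruction space itself is trivial ``modulo the translation-invariant piece already absorbed''. That is not right: after enlarging the domain by $\rho\harm^1_\infty$ the operator $\triangle$ has index $0$ and kernel $\harm^1_0$, so its cokernel has dimension $\dim\harm^1_0$, which is generally nonzero. Nothing is ``ruled out''; you must verify the pairing vanishes. The paper does this directly by integrating by parts each term against $\beta \in \harm^1_0$: for $\linner{\delta_g h, \beta}$ the boundary term is the flux of the contraction of $h$ with $\beta$, and since the asymptotic limit of $h$ lies in $S^2T^*X$ (no $dt$-components) this contraction has no $\partial_t$-part, so the flux vanishes; for $\linner{d\,\tr_g h, \beta}$ one needs the limit of $\beta$ to have no $dt$-component, and this is exactly where the single-end hypothesis enters via Corollary \ref{degonehodgecor}. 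Your sketch never uses the specific structure of $T_g\scrmkz$ (that limits of $h$ have no $dt$-components), and uses the single-end hypothesis in the wrong place. A smaller point: the $\rho h_X$ contribution to $f$ is not compactly supported --- $g$ is only asymptotic to the product metric, so $(2\delta_g + d\,\tr_g)h_X$ is merely exponentially decaying on the end --- though your conclusion $f \in \holdad{k-1}$ is still correct.
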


\begin{proof}
$(2\delta_{g} + d\:\tr_{g})\delta^{*} = \triangle_{g}$ according to
lemma \ref{delstarlem}, so it suffices to show that the image of
$2\delta_{g} + d\:\tr_{g} : T_{g}\scrmkz \to \holdad{k-1}(\Lambda^{1})$
is contained in the image of
\begin{equation*}
\triangle : \holdad{k+1}(\Lambda^{1}) \oplus \rho \harm^{1}_{\infty}
\to \holdad{k-1}(\Lambda^{1}) .
\end{equation*}
It follows from theorem \ref{lapindthm} that this has index $0$,
so its image is the \ltwoorth{} complement to its kernel $\harm^{1}_{0}$,
the space of bounded harmonic $1$-forms.

Now, if $h \in T_{g}\scrmkz$ and $\beta \in \harm^{1}_{0}$ then
the difference between $\inner{\delta_{g}h, \beta}$ and
$\inner{h, \delta^{*}_{g}\beta} = 0$ is the divergence of the contraction
of $h$ with $\beta$. The boundary condition on $h$ ensures that the
asymptotic limit of the contraction has no $dt$-component, so the integral
of the divergence is $0$. Hence
\[ \linner{\delta_{g}h, \beta} = 0 . \]
The hypothesis that $M$ has a single end ensures that the asymptotic limit
of $\beta$ has no $dt$-component (corollary \ref{degonehodgecor}),
so integration by parts also applies to show that
\mbox{$\linner{d\:\tr_{g}h, \beta} = 0$}.
Thus the image of $2\delta_{g} + d\:\tr_{g}$ is \ltwoorth{} to $\harm^{1}_{0}$.
\end{proof}

Now we can use a real analytic $\cali_{g}$-invariant submanifold
$\cals \subset \scrmkz$ with $T_{g}\cals = \slt$ as a slice for the
$\caldkpz$-action.
Let $\qcalz \subset \cals$ be the subset of Ricci-flat metrics.
As in the compact case $\qcalz$ is an analytic subset of an analytic
submanifold $Z' \subset \cals$, defined as the zero set of the composition
of the Ricci functional $\cals \to \holdad{k-2}(S^{2}T^{*}M)$ with the
projection onto the image of its derivative at $g$.
On $K$ the derivative of the Ricci functional is the Lichnerowicz Laplacian,
so $T_{g}Z'$ is the space of harmonic sections of $S^{2}T^{*}M$,
exponentially asymptotic to sections of $S^{2}T^{*}X$ (i.e. the asymptotic
limit has no $dt$-components). This is the space of infinitesimal
Ricci-flat EAC deformations $\varepsilon(g)$.

In general we can use regularity and arguments like in
\cite[\S 6.6]{jn1} to show that $Z$ consists of smooth
EAC metrics, and then extend the proof of theorem \ref{koisothm}, thus proving
that $\qcalz/\cali_{g}$ is homeomorphic to a neighbourhood
of $g$ in $\defmetp$.

We have now set up the deformation theory for EAC Ricci-flat metrics
that is required, together with the unobstructedness of deformations of
torsion-free EAC \gstr s for $G = G_{2}$ and $Spin(7)$
stated in proposition \ref{cylgpremodprop},
in order to prove theorem \eacopenthm{} by the same argument as for
the compact case.

\bibliographystyle{plain}
\bibliography{g2geom}

\end{document}